\newtheorem{theorem}{Theorem}[section]
\newtheorem{lemma}{Lemma}[section]
\newtheorem{corollary}{Corollary}[section]
\newtheorem{proposition}{Proposition}[section]
\newcommand{\sgn}{\mbox{sgn}}
\newcommand{\R}{\mathbb{R}}
\theoremstyle{definition}
\newtheorem{definition}{Definition}[section]
\def\cal{\mathcal}
\def\ge{\geqslant}\def\le{\leqslant}
\def\e{\varepsilon}\def\~{\widetilde}
\begin{document}
\title[2D Euler equation on the strip: stability of a rectangular patch
 ]{2D Euler equation on the strip: stability of a rectangular patch}
\author{J. Beichman, S. Denisov  }

\address{
\begin{flushleft}
\vspace{1cm} Jennifer Beichman: jbeichman@smith.edu\\ \vspace{0.1cm}
Smith College\\
Department of Mathematics and Statistics\\
44 College Lane\\
Northampton, MA 01063, USA\\
\vspace{1cm} Sergey Denisov: denissov@wisc.edu\\\vspace{0.1cm}
University of Wisconsin--Madison\\  Mathematics Department\\
480 Lincoln Dr., Madison, WI, 53706,
USA\vspace{0.1cm}\\and\\\vspace{0.1cm}
Keldysh Institute for Applied Mathematics, Russian Academy of Sciences\\
Miusskaya pl. 4, 125047 Moscow, RUSSIA\\
\end{flushleft}
}\maketitle

\renewcommand{\thefootnote}{\fnsymbol{footnote}}
\footnotetext{\emph{Key words:} 2D Euler, patch of vorticity, stability.  \emph{2010 AMS Mathematics Subject Classification:} 76B03, 35Q35 }
\renewcommand{\thefootnote}{\arabic{footnote}}

\begin{abstract}
We consider the 2D Euler equation of incompressible fluids on a
strip $\mathbb{R}\times \mathbb{T}$ and prove the stability of the
rectangular stationary state $\chi_{|x|<L}$ for large enough $L$.
\end{abstract}\vspace{1cm}

\section{Introduction}
In this paper we will consider the stability of a certain class of steady solutions to the Euler equation in a
 two dimensional cylindrical domain. The study of such stability questions is well developed in the planar case.
 In the plane, the primary focus has been in the stability of circular patches, starting with \cite{wp}, which studied
 the evolution of a circular patch in a bounded domain and proved stability using a spectral argument. In a similar vein,
 \cite{isg} used conserved quantities to derive a bound on the diameter growth. The use of conserved quantities was also
  key in \cite{sv}. There has also been work studying the stability and instability of other steady solutions in the plane, such
  as the Kirchhoff ellipse cf. \cite{sg}.

In other domains, these types of questions are less well understood.
In the strip $\mathbb{R}\times [0,a]$, the work of Caprino and
Marchioro \cite{cm} shows the stability of monotonically increasing
steady vorticity distributions with restrictive conditions on the
associated velocity. More recently, Bedrossian and Masmoudi
\cite{bm} showed nonlinear stability of Couette flow in the cylinder
$S=\mathbb{R}\times \mathbb{T}$.

This paper considers steady patch solutions of the form $\chi_{E_0}(z)$ where $E_0 = [-L,L]\times \mathbb{T}$ to the problem
\begin{equation}\label{ecyl}
\partial_t\theta+\nabla \theta\cdot u=0, \quad \theta|_{t=0}=\theta_0=\chi_E
\end{equation}
for a compact set $E\subset S$. The velocity $u(z,t)$ is related to
the vorticity $\theta$ via a cylindrical Biot-Savart law. Let the
stream function $\Psi$ be the function that solves the elliptic
problem
\begin{equation}\label{strcyl}
(2\pi)^{-1} \Delta \Psi = \theta,\quad \lim_{x\rightarrow
+\infty}\partial_1 \Psi(x,y,t) = - \lim_{x\rightarrow
-\infty}\partial_1 \Psi(x,y,t), \quad |\Psi(x,y,t)|\le C(|x|+1).
\end{equation} The cylindrical Biot-Savart law is then
\begin{equation}\label{bscyl}
u =\nabla^\perp\Psi= k*\theta,  \quad \Gamma(x,y) = \frac{1}{2}\log
(\cosh(x)-\cos(y)),\quad k(x,y) = \nabla^\perp\Gamma(x,y) =
\frac{(-\sin(y),\sinh(x))}{2(\cosh(x)-\cos(y))}.
\end{equation}
The velocity $u$ defined by the cylindrical Biot-Savart law exists
and is unique by the following Lemma.
\begin{lemma}
 Given compactly supported $\theta(z,t)\in L^\infty(S)$, all solutions to equation \eqref{strcyl}
 are given by
\[\Psi(x,y) = \Gamma*\theta +C\] for some constant $C$.
\end{lemma}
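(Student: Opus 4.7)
\medskip

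\noindent\textbf{Proof plan.} I split the argument into existence and uniqueness. For existence, the natural candidate is $\Psi_0:=\Gamma*\theta$, so I first verify that $\Gamma$ is a fundamental solution of $(2\pi)^{-1}\Delta$ on the cylinder $S$. The local expansion $\cosh(x)-\cos(y)=\tfrac12(x^2+y^2)+O((x^2+y^2)^2)$ near the origin gives $\Gamma(x,y)=\log\sqrt{x^2+y^2}+O(1)$, matching the planar logarithmic kernel up to a bounded term; away from the origin a direct differentiation shows $\Delta\Gamma=0$. Hence $(2\pi)^{-1}\Delta\Gamma=\delta_0$ distributionally on $S$, and since $\theta$ is compactly supported so that convolution commutes with $\Delta$, I obtain $(2\pi)^{-1}\Delta\Psi_0=\theta$.

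Next I read off the asymptotics of $\Gamma$ at $\pm\infty$ from $\cosh(x)-\cos(y)=\tfrac12 e^{|x|}(1+O(e^{-|x|}))$, uniformly in $y\in\mathbb{T}$:
\[
\Gamma(x,y)=\tfrac{|x|}{2}-\tfrac{\log 2}{2}+O(e^{-|x|}),\qquad \partial_1\Gamma(x,y)=\tfrac{\sgn x}{2}+O(e^{-|x|}).
\]
Convolving the first expansion against the compactly supported $\theta$ yields $|\Psi_0(x,y)|\le C(|x|+1)$, and convolving the second gives the opposite-sign limits $\lim_{x\to\pm\infty}\partial_1\Psi_0(x,y)=\pm\tfrac12\int_S\theta\,dz$, so $\Psi_0$ satisfies all three requirements in \eqref{strcyl}.

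For uniqueness, I set $\phi:=\Psi_1-\Psi_2$ for any two solutions. Then $\phi$ is smooth and harmonic on $S$, satisfies $|\phi(x,y)|\le C(|x|+1)$, and the limits $\ell_\pm:=\lim_{x\to\pm\infty}\partial_1\phi(x,y)$ obey $\ell_+=-\ell_-$. Expanding in Fourier series in the periodic variable, $\phi(x,y)=\sum_{k\in\mathbb{Z}}a_k(x)e^{iky}$, harmonicity gives $a_k''=k^2 a_k$. For $k\ne 0$ the general solution $A_k e^{|k|x}+B_k e^{-|k|x}$ grows exponentially at one of $\pm\infty$, which is incompatible with the linear bound $|a_k(x)|\le C(|x|+1)$ inherited from $\phi$; hence $A_k=B_k=0$. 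Thus $\phi(x,y)=a_0(x)=\alpha x+\beta$, and $\ell_+=-\ell_-$ reduces to $\alpha=-\alpha$, forcing $\alpha=0$ and $\phi\equiv\beta$. The only genuinely technical step is the uniform asymptotic analysis of $\Gamma$ at $\pm\infty$; the remainder is a standard separation-of-variables argument on the cylinder, so I do not anticipate a serious obstacle.
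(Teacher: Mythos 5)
Your proof is correct and follows the same overall skeleton as the paper's: existence by exhibiting $\Gamma$ as a fundamental solution of $(2\pi)^{-1}\Delta$ on $S$, and uniqueness by showing that the difference of two solutions is a harmonic function of at most linear growth which the side conditions force to be constant. The differences are in the tools. For uniqueness the paper extends $\widetilde\Psi=\Psi_1-\Psi_2$ periodically to $\R^2$ and invokes Liouville's theorem for harmonic functions of polynomial growth to conclude $\widetilde\Psi=C_1x+C_2y+C_3$, then kills $C_2$ by periodicity in $y$ and $C_1$ by the counter-rotating condition on $\partial_1\widetilde\Psi$; you instead separate variables on the cylinder, observing that the nonzero Fourier modes satisfy $a_k''=k^2a_k$ and hence grow exponentially, which is incompatible with the linear bound. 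The two routes are equally valid; yours is more self-contained (no appeal to the polynomial-growth Liouville theorem), while the paper's is shorter. On the existence side you do more than the paper: the paper only records $\Delta\Gamma=0$ away from the origin and the local logarithmic singularity, leaving unverified that $\Gamma*\theta$ actually satisfies the growth bound $|\Psi|\le C(|x|+1)$ and the limit condition $\lim_{x\to+\infty}\partial_1\Psi=-\lim_{x\to-\infty}\partial_1\Psi$, whereas you derive both from the uniform asymptotics $\Gamma(x,y)=\tfrac{|x|}{2}-\tfrac{\log 2}{2}+O(e^{-|x|})$ and $\partial_1\Gamma(x,y)=\tfrac{\sgn x}{2}+O(e^{-|x|})$. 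Since a ``solution to \eqref{strcyl}'' must satisfy all three conditions, that verification is genuinely needed for the statement as posed, so this is a substantive completion of the paper's argument rather than padding.
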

\begin{proof}
To see uniqueness, let $\Psi_1$ and $\Psi_2$ be two solutions to the elliptic problem \eqref{strcyl}. Then $\widetilde{\Psi}=\Psi_1-\Psi_2$ solves
\[\Delta \widetilde{\Psi} = 0,\quad
\lim_{x\rightarrow +\infty}\partial_1 \widetilde{\Psi}(x,y,t) = -
\lim_{x\rightarrow -\infty}\partial_1 \widetilde{\Psi}(x,y,t).
\]
The upper bound on $\Psi_1$ and $\Psi_2$ guarantees that the
$\widetilde{\Psi}$ grows at most linearly. If we extend
$\widetilde{\Psi}$ to all of $\R^2$ by periodicity in $y$,
Liouville's Theorem for harmonic functions gives that
$\widetilde{\Psi}=C_1x+C_2y+C_3$. Since $\widetilde{\Psi}$ is
periodic in $y$, $C_2 = 0$.  The conditions on
$\partial_1\widetilde{\Psi}$ mean that $C_1=0$. Thus
$\widetilde{\Psi}=C$.

It remains to show that $\Gamma*\theta$ is a solution to
\eqref{strcyl}. One can directly check that $\Delta \Gamma=0,\,
z\neq 0$. Moreover,
\[
(2\pi)^{-1}\Gamma(z)=\frac{1}{\pi}\log|z|+C+o(1), \quad |z|\to 0
\]
and  $(2\pi)^{-1}\Delta (\Gamma *\theta)=\theta$.
\end{proof}

In contrast to the \cite{cm}, these boundary conditions give a counter rotating velocity as $|x|\rightarrow\infty$ with a linear transition
within the patch $E_0$. Additionally, this Biot-Savart law does not produce velocity in $L^2(S)$. The kernel $k(x,y)$ can be decomposed into
two pieces, one in $L^1(S)$ and the other which is bounded. Observe that
\begin{equation}\label{ubd}
 u(x,y,t) = \int_S \frac{\left((-\sin(y-\xi_2),\sgn(x-\xi_1)(\cos(y-\xi_2)-e^{-|x-\xi_1|})\right)}{2(\cosh(x-\xi_1)-
 \cos(y-\xi_2))}\theta(\xi)d\xi+ \frac{1}{2}\int_S (0,\sgn(x-\xi_1))\theta(\xi)d\xi.
\end{equation}
Notice that the first term is convolution with a kernel in $L^1(S)$,
as near $0$, it behaves like $(x^2+y^2)^{-1/2}$ and away from $0$ it
decays exponentially fast. The second term, however, does not decay
at all so the total kinetic energy of this problem could be
infinite. However, we can define the regularized energy $F(\theta)$
for the equation \eqref{ecyl} and compactly supported
$\theta(z,t)\in L^\infty(S)$ in the following way:
 \begin{equation}\label{rege}
F(\theta)(t) = \int_S\int_S \theta(z,t)\theta(\xi,t)
\log(\cosh(x_1-x_2)-\cos(y_1-y_2))dzd\xi, z=(x_1,y_1),\xi=(x_2,y_2).
 \end{equation}
When we consider the evolution of a patch under this flow, we will
show in section 2 that the total mass, the first coordinate of
center of mass, and regularized energy are conserved.\smallskip

We will use the following notation in the paper. If $A$ and $B$ are
sets, $A\Delta B$ stands for the symmetric difference. The symbol
$C$ will denote an absolute constant, and its actual value can
change from formula to formula. If $f_{1(2)}(x)$ are two positive
functions and
\[
\sup_{x}\frac{f_1(x)}{f_2(x)}<\infty
\]
we will write $f_1\lesssim f_2$. This is equivalent to writing
$f_1=O(f_2)$.\smallskip

We can now state our main Theorem. Similar to \cite{sv}, we will
show that the steady patch solution $E_0 = [-L,L]\times\mathbb{T}$
for sufficiently large $L$ is stable for all times. It is convenient
for our calculations to introduce what we will call a point of
centering.
\begin{definition}
 A \textit{point of centering} $x_c(t)$ for a patch $E(t)$ is the value in $\R$ so that
\[
 \int_{[x_c,\infty)\times\mathbb{T}}\chi_{E(t)}dz =
 \int_{(-\infty,x_c]\times\mathbb{T}}\chi_{E(t)}dz.
\]
\end{definition}
\noindent Notice that this point is not necessarily unique and the
set of all such points is always a segment or a single point. We use
a point of centering to make the comparison between the evolved
patch $E(t)$ and the simple rectangle $E_0$ more natural.

Our main result is the following Theorem.
\begin{theorem}\label{main}
There is an absolute constant $L_0>2$  such that the following
statement is true. If
\begin{enumerate}
\item[(a)]
$L>L_0$, $\epsilon<1$,
\item[(b)]
 $E$ is a compact subset of $S$ and $0$ is one of its
points of centering,
\item[(c)]
 $|E|=4L\pi$,
 \item[(d)] the regularized energy satisfies
\begin{equation}\label{sma}
F(\chi_E)=F(\chi_{E_0})+O(L\epsilon^2),
\end{equation}
\item[(e)] function $\theta$ solves 2D Euler equation \eqref{ecyl} with the Biot-Savart law given
by \eqref{strcyl} and \eqref{bscyl},
\end{enumerate} then
$\theta(t)=\chi_{E(t)}$ and $E(t)$ satisfies
\begin{eqnarray}
\int_S ||x-x_c(t)|-L|\chi_{E(t)\, \Delta\, E_0(t)}dxdy\lesssim
\epsilon^2,\\ |x_c(t)|\lesssim L^{-1}\epsilon^2 \label{pce1}
\end{eqnarray}
for all $t>0$.  Above $x_c(t)$ is any point of centering for $E(t)$
and $E_0(t)=[x_c(t)-L,x_c(t)+L]\times \mathbb{T}$.

Moreover, if  $\mu>\epsilon$, then
\[
|(E(t)\,\Delta\, E_0(t))\cap \{||x-x_c(t)|-L|>\mu\}|\lesssim
\epsilon^2\mu^{-1}.
\]
\end{theorem}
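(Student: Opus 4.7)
The plan is to reduce Theorem~\ref{main} to a single quantitative coercivity estimate for $F$ near the rectangular patch, and then combine it with the three conservation laws promised in Section~2. By Yudovich theory applied to $L^\infty$ compactly supported vorticity, \eqref{ecyl}--\eqref{bscyl} has a unique global solution and the associated flow is area preserving, so $\theta(t)=\chi_{E(t)}$ with $|E(t)|=4L\pi$ for all $t$. Section~2 will give conservation of $|E(t)|$, of the first moment $X_c(t):=\int_S x\,\chi_{E(t)}\,dz$, and of the regularized energy $F(\chi_{E(t)})$; these are the only properties of the dynamics I will use.

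The heart of the argument is the following coercivity lemma, which I would establish separately: for every compact $A\subset S$ with $|A|=4L\pi$ and every point of centering $a$ of $A$,
\[
F(\chi_{A_0})-F(\chi_A)\;\ge\; c\,L\int_S \bigl||x-a|-L\bigr|\,\chi_{A\,\Delta\,A_0}(z)\,dz,\qquad A_0:=[a-L,a+L]\times\mathbb{T},
\]
with an absolute $c>0$, provided $L>L_0$. The asymptotic $\Gamma(x,y)=|x|/2-\tfrac12\log 2+O(e^{-|x|})$ of the cylindrical kernel shows that at long range $\Gamma$ behaves like a linear confining potential in $x$, which identifies the rectangle $A_0$ as a nondegenerate maximizer of $F$ among patches of area $4L\pi$ with a prescribed point of centering. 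Quantitatively, the point-of-centering condition forces the signed measure $\chi_A-\chi_{A_0}$ to have vanishing mass separately on each half $\{x\gtrless a\}$; combining this cancellation with the linear asymptotic of $\Gamma$ and pairing against the background $\chi_{A_0}$ produces the stated linear penalty, with the coefficient $L$ coming from the width of $A_0$.

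Granting the lemma, the first inequality of Theorem~\ref{main} is immediate: applying it at time $t$ with $A=E(t)$, $a=x_c(t)$, using translation invariance of $F$ in $x$ (so $F(\chi_{E_0(t)})=F(\chi_{E_0})$) and conservation of $F$,
\[
c\,L\int_S \bigl||x-x_c(t)|-L\bigr|\,\chi_{E(t)\,\Delta\,E_0(t)}(z)\,dz \;\le\; F(\chi_{E_0})-F(\chi_E)\;\lesssim\;L\epsilon^2.
\]
For \eqref{pce1}, the lemma at $t=0$ gives $\int ||x|-L|\,\chi_{E\Delta E_0}\,dz\lesssim\epsilon^2$. Since $|E\cap\{x>0\}|=|E_0\cap\{x>0\}|=2L\pi$ by centering and area, the integral of $\chi_E-\chi_{E_0}$ over $\{x>0\}$ vanishes, so $\int_{\{x>0\}}x(\chi_E-\chi_{E_0})\,dz=\int_{\{x>0\}}(x-L)(\chi_E-\chi_{E_0})\,dz$ is bounded by $\int ||x|-L|\,\chi_{E\Delta E_0}\,dz$; the analogous identity on $\{x<0\}$ yields $|X_c(0)|\lesssim\epsilon^2$. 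Running the same subtraction at time $t$ with $x_c(t)\pm L$ in place of $\pm L$ produces $|X_c(0)-4L\pi\,x_c(t)|\lesssim\epsilon^2$, hence $|x_c(t)|\lesssim L^{-1}\epsilon^2$. The ``moreover'' assertion is Chebyshev's inequality applied to the first bound.

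The hard part is the coercivity lemma with the sharp $L$-prefactor. A softer convexity-type argument would give only a weight-free $L^1$ bound on $\chi_{E\Delta E_0}$, but obtaining the full weighted bound at the rate $\epsilon^2$ requires a quantitative use of the linear long-range asymptotic of $\Gamma$, a careful accounting of the cancellations from the centering condition, and control of the $O(e^{-|x|})$ tail corrections; the latter is plausibly what forces the threshold $L>L_0$.
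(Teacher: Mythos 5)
Your reduction of the theorem to a single weighted coercivity estimate for $F$ near the rectangle, combined with conservation of area, first moment, and regularized energy, is exactly the architecture of the paper, and your derivation of the three conclusions from that estimate (including the Chebyshev step and the treatment of $x_c(t)$ via the first moment, which is if anything slightly more careful than the paper's) is sound. But the coercivity lemma you state is both unproved and, as written, false: the rectangle is a \emph{minimizer} of $F$ among patches of fixed area and fixed point of centering, not a maximizer. Since $\Gamma(x,y)\sim |x|/2$ at long range and the kernel $|x_1-x_2|$ increases with separation, spreading mass away from the interval \emph{increases} $\Phi(\rho)=\int\int|x_1-x_2|\rho(x_1)\rho(x_2)$, hence increases $F$; the correct inequality is $F(\chi_A)-F(\chi_{A_0})\ge cL\int ||x-a|-L|\,\chi_{A\Delta A_0}$. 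Your application survives only because hypothesis (d) is two-sided, but the heuristic you give (``linear confining potential,'' ``nondegenerate maximizer'') points in the wrong direction and would not lead to a proof.

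More importantly, the lemma you defer is essentially the entire content of the paper. Its proof is not a soft perturbation of the linear asymptotic: one must (i) write $F(\chi_E)=(2\pi)^2\Phi(\rho_E)+F_1(\chi_E)-\log 2\,\|\chi_E\|_{L^1}^2$ with $\rho_E$ the vertical average, (ii) prove the sharp one-dimensional rearrangement inequality $\Phi(\chi_J)\ge\Phi(\chi_{J_0})+CL\int_{J\Delta J_0}||x|-L|\,dx$ by an explicit gap-closing induction, (iii) pass from characteristic functions to densities $0\le\rho\le 1$ via a concavity argument on the Fourier side (the second variation of $\Phi$ is $-\int|\widehat\delta|^2\,dk/(2k^2)<0$, so minimizers over the relevant convex set are characteristic functions), and (iv) show that the remainder $F_1$, whose kernel $K=\log(\cosh-\cos)-|x_1-x_2|+\log 2$ annihilates $\chi_{E_0}$ and decays exponentially off the diagonal but has a logarithmic singularity on it, satisfies $|F_1(\chi_E)-F_1(\chi_{E_0})|\lesssim\int||x|-L|\,\chi_{E\Delta E_0}$ --- this last step needs nontrivial potential-theoretic estimates (the $|A|\log|A|$ behaviour near the diagonal) and is precisely what forces $L>L_0$ so the quadratic error is absorbed by the $L$-prefactor of the main term. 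None of this is supplied or sketched. A smaller point: classical Yudovich theory does not apply directly here because the velocity is bounded but non-decaying (the $\sgn(x)$ term in the Biot--Savart law), so existence, uniqueness, and the transport/conservation properties you invoke require the bounded-velocity, bounded-vorticity framework of Kelliher together with a verification that the renormalized Biot--Savart law reduces to the cylindrical one.
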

This result has a similar structure to the result in \cite{sv} for
circular patches in the plane, but with a point of centering
$x_c(t)$ in the role of the center of mass. However, that proof
relies on conserved quantities that do not hold in the cylindrical
case, namely $\int_{\mathbb{R}^2}|z|^2\theta(z)dz$. Instead, our
argument uses the one dimensional nature of the cylindrical problem
and the conservation of regularized energy. In the next section, we
will establish the necessary conserved quantities. In the third
section, we prove the main result on the stability.

\section{Preliminaries}
To proceed with our arguments on stability, we need a result
equivalent to Yudovi{\v{c}}'s result for the evolution of
$L^1\bigcap L^\infty$ solution \cite{yu,bm}. We are working on an
unbounded domain with periodicity in one direction. If we consider
the periodic extension of our problem to $\R^2$, we are interested
in bounded solutions on $\R^2$ with some decay in one direction.
Recent work by Kelliher and collaborators gives existence and
uniqueness for solutions to the Euler equations on $\R^2$ for
velocity $u$ and associated vorticity $\theta = \nabla\times u$
(defined in the sense of distributions) both bounded, with no decay
requirement. These results also include an adaptation of the
standard Biot-Savart law on $\R^2$ to relate $u$ and $\theta$
despite the lack of convergence of the standard integral identities.

We will apply this work in Appendix \ref{appe} to show the following
Theorem:
\begin{theorem}\label{eandu}
 Let $\theta_0(z)$ for $z\in S= \R\times\mathbb{T}$ be in $L^\infty (S)$ with compact support. Then there exists unique $(u,\theta)$
 with $u\in L^\infty(S)$ and $\theta\in  L^\infty(S)$ with compact support such that $\partial_t\theta + u\cdot\nabla\theta =0$ in the
 sense of distributions with \[u(z,t)=\nabla^\perp  (\Gamma*\theta) =
 \int_S
 \frac{(-\sin(y-\xi_2),\sinh(x-\xi_1))}{2(\cosh(x-\xi_1)-\cos(y-\xi_2))}\theta(\xi,t)d\xi\] and $\theta(z,0) =\theta_0(z)$.
\end{theorem}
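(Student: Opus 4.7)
The natural strategy is to lift the problem to $\mathbb{R}^2$ via periodic extension in $y$, solve it there using the Kelliher-type well-posedness theory for bounded (not necessarily decaying) vorticity, and then descend back to the strip. Let $\widetilde\theta_0(x,y)$ denote the extension of $\theta_0$ by $2\pi$-periodicity in $y$; it is bounded on $\mathbb{R}^2$, its support lies in an infinite horizontal strip, and it fits the hypotheses of the Kelliher existence/uniqueness theorem. That theorem then produces a pair $(\widetilde u,\widetilde\theta)$, bounded on $\mathbb{R}^2$, that solves the transport equation distributionally, is unique in an appropriate gauge, and is related to $\widetilde\theta$ via a suitably renormalized Biot--Savart law on the plane.

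I would then transfer the solution back to $S$. The $y$-translate $\bigl(\widetilde u(x,y+2\pi,t),\,\widetilde\theta(x,y+2\pi,t)\bigr)$ satisfies the same Cauchy problem as $(\widetilde u,\widetilde\theta)$ because the initial data agree, so by uniqueness $\widetilde\theta$ is $2\pi$-periodic in $y$ for all $t$ and descends to a function $\theta$ on $S$. Because $\widetilde u$ is globally bounded, the associated Lagrangian flow displaces particles by at most $\|\widetilde u\|_\infty t$; the pushforward of the (periodic) support of $\widetilde\theta_0$ therefore remains in a periodic horizontal strip of finite width at each finite time, and the restriction to $S$ has compact support. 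Uniqueness of $(u,\theta)$ on $S$ in the stated class then reduces immediately to the $\mathbb{R}^2$ uniqueness via periodic extension.

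The step I expect to be the main obstacle is matching Kelliher's renormalized $\mathbb{R}^2$ Biot--Savart law, evaluated on the periodic extension $\widetilde\theta$, with the explicit cylindrical formula in the statement. This is essentially a Poisson summation: the infinite sum
\[
\sum_{n\in\mathbb{Z}}\tfrac{1}{2}\log\bigl(x^2+(y-2\pi n)^2\bigr)
\]
diverges, but after subtracting suitable $n$-dependent constants it sums (up to an irrelevant additive constant in the stream function) to $\Gamma(x,y)=\tfrac{1}{2}\log(\cosh x-\cos y)$. Differentiating under the sum and passing the derivative through the convolution is legitimate because $\nabla^\perp\Gamma$ is absolutely integrable on $S$, so convolution against the bounded, compactly supported $\theta(\cdot,t)$ yields exactly the integral formula for $u$ in the statement. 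Carrying out the careful handling of these renormalization constants and the distributional identification of $u$ with $\nabla^\perp(\Gamma\ast\theta)$ is what Appendix \ref{appe} must accomplish in full detail.
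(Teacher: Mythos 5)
Your plan is essentially the paper's proof: extend $\theta_0$ periodically in $y$, invoke Kelliher's well-posedness theorem for bounded velocity and bounded vorticity on $\mathbb{R}^2$, deduce periodicity of the solution from uniqueness applied to the $y$-translate, use boundedness of $\widetilde u$ to keep the support in a finite strip, and identify the planar renormalized Biot--Savart law with the cylindrical one by Poisson summation. Two remarks on where your sketch is thinner than what is actually needed.

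First, the gauge. Kelliher's theorem produces a \emph{family} of bounded solutions parametrized by a free drift $U_\infty(t)\in (C[0,T])^2$, and uniqueness holds only once $U_\infty$ is fixed; this function is precisely the measure of non-uniqueness in the bounded-velocity class. You say the solution is ``unique in an appropriate gauge,'' but the uniqueness assertion of the theorem requires you to show that the stated cylindrical law $u=\nabla^\perp(\Gamma*\theta)$ \emph{forces} a specific choice. The paper does this in two steps: the condition $\lim_{x\to+\infty}u_2=-\lim_{x\to-\infty}u_2$ (inherited from the boundary condition on $\Psi$ in \eqref{strcyl}) is already satisfied by $k*\theta$ and hence forces $U_\infty^{(2)}\equiv 0$, while a nonzero $U_\infty^{(1)}$ merely produces the Galilean translate $\theta_1(x+\int_0^t U_\infty^{(1)},y,t)$ of the $U_\infty^{(1)}\equiv 0$ solution, which does not satisfy the stated Biot--Savart law; so both components must vanish. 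Without this identification your descent to $S$ does not yet yield the uniqueness claimed in the statement. Second, a technical simplification you should adopt: rather than summing $\sum_n\frac12\log(x^2+(y-2\pi n)^2)$, which diverges and requires you to track renormalization constants before differentiating, sum the velocity kernel itself. The series
\begin{equation*}
\sum_{k\in\mathbb{Z}}\frac{(-(b-2\pi k),\,a)}{a^2+(b-2\pi k)^2}
=\frac{(-\sin b,\,\sinh a)}{2(\cosh a-\cos b)}
\end{equation*}
converges and is evaluated directly by Poisson summation (this is the paper's Lemma \ref{poiss}); applied to $(a_R\mathcal{K})*(\theta-\theta^0)$ over the periodic translates of the support, it gives the cylindrical kernel in the limit $R\to\infty$ with no divergent constants to subtract.
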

We postpone the proof of this Theorem until the Appendix.

Once we have established existence and uniqueness, we can study the conserved quantities of the equation.
\begin{proposition}
 For $\theta(z,t)$ a solution to \eqref{ecyl}, the following quantities are conserved:
\begin{enumerate}
 \item the total mass $M = \int_S\theta(z)dz$,
\item the horizontal center of mass $x_0 = \int_S x \theta(z)dz$,
\item the total energy $F(\theta) = 2\int_S\int_S \theta(z)\theta(\xi) \Gamma(z-\xi)d\xi
dz$.
\end{enumerate}
\end{proposition}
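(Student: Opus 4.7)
The plan is to derive all three conservation laws from the weak form of the transport equation, exploiting the divergence-free structure $u=\nabla^\perp\Psi$ together with the compact support of $\theta$. Since $\nabla\cdot u=0$, the equation $\partial_t\theta+u\cdot\nabla\theta=0$ can be rewritten as $\partial_t\theta+\nabla\cdot(u\theta)=0$, and integration by parts against any smooth test function $\phi$ — with no boundary contributions, because $u\theta$ has compact support — gives
\[
\frac{d}{dt}\int_S\phi(z)\theta(z,t)\,dz=\int_S\theta(z,t)\,u(z,t)\cdot\nabla\phi(z)\,dz.
\]
Taking $\phi\equiv 1$ yields conservation of the total mass $M$. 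Taking $\phi(z)=x$ reduces the center-of-mass claim to $\int_S u_1\theta\,dz=0$. I would write $u_1(z)=(k_1*\theta)(z)$ with $k_1(x,y)=-\sin y/[2(\cosh x-\cos y)]$, observe that $k_1(-z)=-k_1(z)$, and apply the symmetrization $z\leftrightarrow\xi$ to conclude that the double integral $\int\int\theta(z)k_1(z-\xi)\theta(\xi)\,dz\,d\xi$ equals its own negative and hence vanishes.

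For the regularized energy, differentiation under the integral combined with the symmetry $\Gamma(z-\xi)=\Gamma(\xi-z)$ reduces the derivative to
\[
\frac{d}{dt}F(\theta)=4\int_S\Psi(z,t)\,\partial_t\theta(z,t)\,dz.
\]
Substituting $\partial_t\theta=-\nabla\cdot(u\theta)$ and integrating by parts once more gives $4\int_S\theta\,(u\cdot\nabla\Psi)\,dz$, and the pointwise orthogonality $u\cdot\nabla\Psi=\nabla^\perp\Psi\cdot\nabla\Psi\equiv 0$ closes the computation. The additive constant ambiguity in $\Psi$ from the lemma is harmless because it contributes $C\frac{d}{dt}M=0$ by the mass identity just proved.

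The main obstacle — indeed essentially the only step requiring care — is justifying the differentiation under the integral and the two integrations by parts when $\theta$ is merely $L^\infty$ with compact support, the patch case $\theta=\chi_E$ being the motivating example, for which $\partial_t\theta$ lives only in the distributional sense. Theorem~\ref{eandu} supplies a bounded velocity $u$, so the Lagrangian flow is well defined and volume preserving, and the support of $\theta(\cdot,t)$ stays compact on any bounded time interval, allowing all spatial integrals to be restricted to a fixed large region during the computation. I would rigorize every step by first establishing the three identities for smooth mollifications $\theta^\varepsilon=\theta*\eta_\varepsilon$, for which every manipulation above is routine, and then passing to the limit using dominated convergence together with the local integrability of $\log|z|$ near the origin.
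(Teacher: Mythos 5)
Your proposal follows essentially the same route as the paper: the weak (transported-by-the-flow) formulation tested against cutoff functions gives mass and, via the oddness of the first component of the Biot--Savart kernel under the symmetrization $z\leftrightarrow\xi$, the center of mass, while the energy is handled by testing against the stream function and invoking $\nabla^\perp\Psi\cdot\nabla\Psi=0$. The only notable difference is that the paper regularizes the test function (taking $\varphi=b(x)\Psi_\e$ with a mollified stream function) rather than the solution $\theta$ itself, which avoids the commutator estimate that your mollification of $\theta$ would require to justify.
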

\begin{proof}
The arguments for Theorem \ref{eandu} include that the vorticity $\theta$ is transported by the flow. Therefore, conservation of mass follows immediately.

Since we have $\partial_t\theta +u \cdot\nabla\theta=0$ only in the sense of distributions, we need to show carefully that we have conservation of
center of mass and regularized energy. Observe that for a smooth function $\varphi\in C^\infty([0,T],C^\infty_0(S))$, we have the following
representation:
\[
 \int_S \varphi(z,T)\theta(z,T)dz -  \int_S \varphi(z,0)\theta(z,0)dz = \int_0^T\int_S \left(\partial_t\varphi +u\cdot\nabla\varphi\right)\theta(z,t)dzdt.
\]
To show our other conserved quantities, we need to choose our smooth
bump function so that the desired quantity appears on the right hand
side of the expression above.

From \eqref{ubd}, we can bound the velocity $u$ by
\[
 \|u(t)\|_{L^\infty(S)}\le \|k_1\|_{L^1(S)}\|\theta(t)\|_{L^\infty(S)} +
 \|\theta(t)\|_{L^1(S)},
\]
where $k_1(z) = k(z) - (0,\sgn(x))$. The $L^1$ bound on $k$ is independent of time, and both $\|\theta\|_{L^1(S)}$ and $\|\theta\|_{L^\infty(S)}$
are conserved in time. Therefore,
 \begin{equation}
\|u(t)\|_{L^\infty(S)} \lesssim \|\theta(0)\|_{L^\infty(S)} +\|\theta(0)\|_{L^1(S)},
 \end{equation}
 and we know that up to a time $T$, $\theta$ is compactly supported. Let $b(x)$ be a smooth bump in the $x$-direction so that $b\equiv 1$ for
 every $x$ in the set $ [-R,R]$ where  $R$ is chosen so that $\mbox{supp}(\theta(z,t))\subseteq [-R,R]$ for all $t\in[0,T]$.

To see conservation of the  center of mass, let $\varphi(z,t) =
xb(x)$. Then,
\begin{align*}
 \int_S x\theta(z,T)dz -  \int_S x\theta(z,0)dz &= \int_0^T\int_S \left(\partial_t\varphi +u\cdot\nabla\varphi\right)\theta(z,t)dzdt\\
&= \int_0^T\int_S u_1(z,t)\partial_1(xb(x))\theta(z,t)dzdt\\
& = \int_0^T\int_S u_1(z,t)\theta(z,t)dzdt + \int_0^T\int_S x u_1(z,t) b'(x)\theta(z,t)dzdt.\\
\end{align*}
The second term is clearly $0$, as $b'(x) = 0$ on the support of $\theta(z,t)$. The first term can be rewritten as
\[
 \int_0^T\int_S \int_S \frac{-\sin(y_1-y_2)}{\cosh(x_1-x_2)-\cos(y_1-y_2)}\theta(\xi,t)\theta(z,t)d\xi dz
\]
which is also $0$, as the kernel is odd and rapidly decaying.

To see the conservation of regularized energy, we repeat a similar argument with $\varphi(z,t) = b(x)\Psi_\e(z,t)$ where
\[
 \Psi_\e(z,t) = \rho_\e *\int_S \log (\cosh(x-\xi_1)-\cos(y-\xi_2))(\rho_\e*\theta)d\xi.
\]
for a smooth, compactly supported bump $\rho_\e (z,t)$ on $[0,T]\times S$ defined as follows. Let $r(x)$ be
 a smooth bump supported in $[-1,1]$ on $\R$ with $\int r =1$. Let $r_\e(x) = \e^{-1}r(x/\e)$, let $r_{1,\e}(y)$
 be the periodic extension of $r_\e$ on $[-\pi,\pi]$, and let $r_{2,\e}(t) = r_\e(t)$. Then,
 $\rho_\e(x,y,t) = r_\e(x)r_{1,\e}(y)r_{2,\e}(t)$. The calculations involve changing the order of
 integration to rearrange the convolution with the mollifier $\rho_\e$ but are otherwise straightforward.
\end{proof}\bigskip

\section{Main Results}

We recall that $S=\mathbb{R}\times \mathbb{T}$. We first consider a
one-dimensional variational problem which will be important later.
Suppose $J$ is a measurable subset of $\mathbb{R}$ and $|J|=2L$.
Assume that $J$ is centered around the origin, such that $|J^+|=L,\,
J^+=J\cap [0,\infty)$. $\left(\right.$Notice here again that the
``centering points" for any set form a closed interval, which can
degenerate to a point$\left. \right)$. Consider a functional
\[
\Phi(\chi_J)=\int_{J}\int_{J} |x_1-x_2|dx_1dx_2.
\]
The following elementary Lemma holds true.
\begin{lemma}\label{one-d} We have
\[
\Phi(\chi_J)\ge  \Phi(\chi_{J_0})+CL\int_{J\,\Delta\,J_0} ||x|-L|dx,
\]
where $J_0=[-L,L]$.
\end{lemma}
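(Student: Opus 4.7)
The plan is to recast $\Phi$ as a one-dimensional integral in the distribution function $F(t):=|J\cap(-\infty,t]|$ and then compare $F$ pointwise to the optimal profile $F_0$ corresponding to $J_0$. A Fubini / layer-cake computation based on $|x-y|=\int_\mathbb{R}\mathbf{1}_{\min(x,y)<t<\max(x,y)}\,dt$ gives
\[
\Phi(\chi_J)=2\int_\mathbb{R} F(t)\bigl(2L-F(t)\bigr)\,dt,
\]
and the centering assumption is precisely $F(0)=L$; for the reference set $F_0(t)=\max(0,\min(2L,t+L))$. Setting $G:=F-F_0$, $A:=J\setminus J_0$, $B:=J_0\setminus J$, and $A^\pm:=A\cap\{\pm x>0\}$, $B^\pm:=B\cap\{\pm x>0\}$, the algebraic identity $x(2L-x)-y(2L-y)=2(L-y)(x-y)-(x-y)^2$ with $x=F$, $y=F_0$ yields
\[
\Phi(\chi_J)-\Phi(\chi_{J_0})=4\int(L-F_0)G\,dt-2\int G^2\,dt.
\]
Since $F(0)=L=F_0(0)$ and $F$ is $1$-Lipschitz, $G\ge 0$ on $(-\infty,0]$ and $G\le 0$ on $[0,\infty)$, matching the sign of $L-F_0$.

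The key pointwise bound is $|G(t)|\le|L-F_0(t)|$ throughout $\mathbb{R}$. Using the centering identities $|A^-|=|B^-|$ and $|A^+|=|B^+|$, case-by-case inspection of $G(t)=|A\cap(-\infty,t]|-|B\cap(-\infty,t]|$ produces
\[
G(t)=\begin{cases}|A^-\cap(-\infty,t]|, & t\le -L,\\ |B^-\cap(t,0]|, & -L\le t\le 0,\\ -|B^+\cap[0,t]|, & 0\le t\le L,\\ -|A^+\cap(t,\infty)|, & t\ge L,\end{cases}
\]
hence $|G|\le L$ on $\{|t|\ge L\}$ and $|G|\le|t|$ on $\{|t|\le L\}$, which matches $|L-F_0|$ exactly. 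Centering is indispensable here: without it, the middle two identities carry a constant offset $|A^\pm|-|B^\pm|$ that destroys the bound near the origin. This is the decisive step of the proof and its main obstacle.

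Combined with the sign information, $|G|\le|L-F_0|$ gives $(L-F_0)G\ge G^2$ pointwise, whence $\Phi(\chi_J)-\Phi(\chi_{J_0})\ge 2\int(L-F_0)G\,dt$. A piecewise evaluation of the right-hand side (Fubini on each of the four regions) returns $2LD_A+\int_B(L^2-s^2)\,ds$, where $D_A:=\int_A(|x|-L)\,dx$ and $D_B:=\int_B(L-|x|)\,dx$. Since $L^2-s^2=(L-|s|)(L+|s|)\ge L(L-|s|)$ on $B\subset[-L,L]$, the right-hand side is at least $2LD_A+LD_B\ge LD$ with $D:=\int_{J\Delta J_0}||x|-L|\,dx=D_A+D_B$, which gives the lemma with constant $C=1$.
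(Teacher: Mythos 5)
Your proof is correct, and it takes a genuinely different route from the paper's. The paper argues combinatorially: it reduces to the case where $J^+$ is a finite union of intervals, closes the gaps one at a time, bounds from below the decrease of $\Phi$ produced by each slide (terms of the form $2L|Q_j|\,|I_j|$), and then groups intervals and gaps according to their distance from $L$ to recover the weight $||x|-L|$. Your layer-cake identity $\Phi(\chi_J)=2\int F(2L-F)\,dt$ replaces all of that with an exact quadratic expansion around $F_0$, in which the centering hypothesis enters only through the single pointwise estimate $|F-F_0|\le |L-F_0|$. I checked the four-case computation of $G$, the sign comparison $(L-F_0)G\ge G^2$ that absorbs the negative quadratic term, and the Fubini evaluation $2\int (L-F_0)G\,dt=2LD_A+\int_B(L^2-s^2)\,ds$; all are correct, and $L^2-s^2\ge L(L-|s|)$ on $B$ indeed yields the lemma with the explicit constant $C=1$. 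What your approach buys: it applies directly to arbitrary measurable $J$ with no reduction to finite unions of intervals; it gives a clean explicit constant; and, since the argument uses only that $F$ is $1$-Lipschitz, nondecreasing, with $F(0)=L$ and $F(+\infty)=2L$, it applies verbatim to densities $0\le\rho\le 1$ with $F(t)=\int_{-\infty}^t\rho$, which would give the estimate \eqref{raz1} directly and render the concavity/bathtub reduction and the existence-of-minimizer Lemma \ref{triv1} unnecessary. The only point worth adding explicitly is that if $\int_J|x|\,dx=\infty$ then $\Phi(\chi_J)=\infty$ and the inequality is vacuous, so in the remaining case all the integrals converge absolutely and the Fubini interchanges are justified.
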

\begin{proof}
Notice that this estimate is scale-invariant in $L$ and the actual
value of $L$ is not important. It is sufficient to assume that
$J^+=\cup_{j=1}^n I_j$ where $I_j$ are disjoint intervals (placed in
the order from left to right). Denote the gaps between them by
$\{Q_j\}$ so we have
\[
\mathbb{R}^+=Q_1\cup I_1\cup Q_2\cup I_2\cup\ldots Q_n\cup I_n\cup
[a,\infty).
\]
We can allow some gaps to be empty if necessary. The proof will
proceed as follows. We will close all gaps $\{Q_j\}$ and estimate
the total change in $\Phi$ that we denote $\delta \Phi$.

Let $J^{(1)}$ be the configuration obtained by closing the $Q_n$ gap
(sliding $I_n$ to the left) and denote the moved interval by
$I_n^{(1)}$ (e.g., $|I_n^{(1)}|=|I_n|$, $I_{n-1}$ and $I_n^{(1)}$
are adjacent to each other in $J^{(1)}$). Consider $J'=J\backslash
I_n=J^{(1)}\backslash I_n^{(1)}$. We have
\[
\Phi(\chi_J)=\Phi(\chi_{J'})+\Phi(\chi_{I_n})+2\int_{J'}d\xi
\int_{I_n} (x-\xi)dx,\quad
\Phi(\chi_{J^{(1)}})=\Phi(\chi_{J'})+\Phi(\chi_{I_n^{(1)}})+2\int_{J'}d\xi
\int_{I_n^{(1)}} (x-\xi)dx
\]
and
\[
\Phi(\chi_J)-\Phi(\chi_{J^{(1)}})= 2|Q_n|\int_{I_n}dx\int_{J'}
d\xi\ge 2L|Q_n| |I_n|,
\]
where the last inequality follows from $|J'|\ge |J^-|=L$, $J^-=J\cap
(-\infty,0]$.

 Compute inductively the total change $\delta_1 \Phi$ in $\Phi$ obtained by
closing all gaps $\{Q_j\}, j=k,\ldots,n$ to the right of $L$ (we
close them in the following order: $Q_n,Q_{n-1},\ldots,Q_k$):
\[
\delta_1 \Phi\ge 2L(|I_n||Q_n|+(|I_n|+|I_{n-1}|)|Q_{n-1}|+\ldots
+(|I_n|+\ldots+|I_k|)|Q_k|)=
\]
\begin{equation}\label{tv}
2L(|I_n|(|Q_n|+\ldots+|Q_k|)+|I_{n-1}|(|Q_{n-1}|+\ldots+|Q_k|)+\ldots+|I_k||Q_k|).
\end{equation}
Consider $[L,\infty)\cap J$ and denote $\epsilon=|[L,\infty)\cap
J|$. Let us divide all intervals $I_k,\ldots,I_n$ into two groups:
those that belong to the interval $[L,L+2\epsilon]$:
$\{I_k,\ldots,I_{j-1}\}$ and those that are to the right of
$L+2\epsilon$: $\{I_j,\ldots,I_n\}$. If $L+2\epsilon$ is an interior
point of some interval, we split this interval into two by creating
an empty gap at point $L+2\epsilon$. Notice that if $I_l\subset
[L+2\epsilon,\infty)$ (i.e., $I_l$ is in the second group), then
\[
|Q_l|+\ldots+|Q_k|\ge {\rm dist}(I_l, L)-\epsilon\ge \,\frac{{\rm
dist}(I_l, L)}{2}.
\]
Therefore, the contribution to \eqref{tv} coming from the second
group of intervals is bounded below by \[ L\sum_{l=j}^n {\rm
dist}(I_l,L)|I_l|\ge 0.5L\int_{x>L+2\epsilon} (x-L)\chi_Jdx.\]
$\Bigl(\Bigr.$Indeed, to see that last inequality we write
$I_l=[a_l,b_l]$ and notice that
\[
{\rm dist}(I_l, L)=a_l-L= (b_l-L)-(b_l-a_l)\ge b_l-L-\epsilon\ge
(b_l-L)/2\Bigl..\Bigr)
\]
Thus,
\[
\delta_1\Phi\gtrsim L\int_{x>L+2\epsilon}
(x-L)\chi_Jdx=L\int_{x>L+2\epsilon} (x-L)\chi_{J\Delta J_0}dx.
\]
Now that all gaps to the right of $L$ are closed, we call the new
configuration $\widehat J$, intervals are $\{\widehat I_j\}$, and
the gaps are $\{\widehat Q_j\}, j=1,\ldots m$. For the rightmost
interval $\widehat I_m$ we have  $\widehat I_m=[L,L+\epsilon]$ per
our construction and thus
\begin{equation}\label{zv1}
|\widehat I_m|=\epsilon.
\end{equation}
We are now closing all gaps in $[0,\infty)$ and estimating
$\delta_2\Phi$, the change of $\Phi$, from below. Notice first that
\begin{equation}\label{zv2}
\sum_{j=1}^m |\widehat Q_j|= \epsilon
\end{equation}
and
\begin{equation}\label{tv2}
\delta_2\Phi\ge 2L(|\widehat Q_1|(|\widehat I_1|+\ldots+|\widehat
I_{m}|)+\ldots+|\widehat Q_m| \cdot|\widehat I_m|).
\end{equation}
Observe that $(\widehat J\,\Delta\,J_0)\cap [0,L]$ is the union of
the $\{\widehat Q_l\}$. We now split all gaps $\{\widehat Q_l\}$
into two groups: those that belong to $[0,L-2\epsilon]$ (it could be
empty if, e.g., $\epsilon>L/2$) and all others. Notice that for each
gap $\widehat Q_p$ in the first group, we have
\[\sum_{j=p}^{m-1} |\widehat I_j| \ge  {\rm dist} (\widehat Q_p, L)-\epsilon\ge \frac{{\rm dist} (\widehat Q_p, L)}{2}. \]
Therefore, the contribution to \eqref{tv2} coming from the first
group of gaps is at least
\[
0.5 L\int_{0<x<L-2\epsilon}(L-x)\chi_{J\,\Delta\,J_0}dx.
\]
Collecting separately the terms in the right hand side of
\eqref{tv2} that contain $\widehat I_m$, we get
\[
2L|\widehat I_m|\cdot(|\widehat Q_1|+\ldots+|\widehat Q_m|)\ge
2L\epsilon^2
\]
by \eqref{zv1} and \eqref{zv2}. Keeping only the gaps in the first
group gives us
\[
\delta_2\Phi\geq \left(\sum_{{\rm first\, group\, of\,gaps }}
2L|\widehat Q_p|\sum_{j=p}^{m-1}|\widehat I_j|\right)+2L|\widehat
I_m|\cdot(|\widehat Q_1|+\ldots+|\widehat Q_m|)\gtrsim
L\int\limits_{x>0, 0<x<L-2\epsilon}|x-L|\chi_{J\Delta
J_0}dx+L\epsilon^2.
\]
We combine now the obtained inequalities to estimate the total
variation $\delta \Phi=\delta_1\Phi+\delta_2\Phi$ as
\[
\delta\Phi\gtrsim
L\int_{x>0,|x-L|>2\epsilon}|x-L|\chi_{J\,\Delta\,J_0}dx+L\epsilon^2\gtrsim
L\int_{x>0}|x-L|\chi_{J\,\Delta\,J_0}dx.
\]
Arguing in the same way for the half-line $\mathbb{R}^-$, we get the
statement of the Lemma since the resulting configuration after
closing all gaps is $J_0$.
\end{proof}

Our first goal is to control the regularized energy functional associated to the
Euler equation on $S$ defined in \eqref{rege}. Given an arbitrary vortex patch $E$, we will
need to transition to the vertical average of the patch to control a
portion of the energy. To that end, we define following functional
\begin{equation}\label{nfn}
\Phi(\rho)=\int_{\mathbb{R}}\int_{\mathbb{R}}
|x_1-x_2|\rho(x_1)\rho(x_2)dx_1dx_2.
\end{equation}
Notice first that $\Phi(\rho)<\infty$ implies
\[
\int_{\mathbb{R}} |x|\rho(x)dx<\infty.
\]

Assume that $\rho_j^+,\rho_j^-\in [0,1], j=0,1,\ldots$ are chosen
such that
\[
\sum_{j=0}^\infty\rho_j^{\pm}=1, \quad \sum_{j=1}^\infty
j\rho_j^{\pm}<\infty.
\]
Fix $\delta>0$ and consider the following convex set: $\mathcal{O}$
is the set of functions $\rho$, defined on $\mathbb{R}$, measurable,
and such that $0\le \rho\le 1$ and
\[
\int_{j\delta}^{(j+1)\delta}\rho(x)dx=\rho_j^+, \quad
\int_{-(j+1)\delta}^{-j\delta}\rho(x)dx=\rho_j^-, \quad j=0,1,\ldots
\]

In the next Lemma, we will study the following variational problem
\begin{equation}\label{vari}
\inf_{\rho\in \mathcal{O}}\Phi(\rho).
\end{equation}

In the Lemma \ref{triv1} from the Appendix, we prove that a
minimizer $\rho^*$ exists.

\begin{lemma}
 If $\rho^*$ is a minimizer
then $\rho^*$ is a characteristic function.
\end{lemma}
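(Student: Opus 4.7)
My approach is a perturbation argument by contradiction. Suppose $\rho^*$ is a minimizer that is not a characteristic function. Then $|\{0<\rho^*<1\}|>0$, so for some $\eta>0$ and some window $I=[j\delta,(j+1)\delta]$ (or its mirror on the negative axis) the set $U=\{\eta<\rho^*<1-\eta\}\cap I$ has positive Lebesgue measure. Split $U$ into two disjoint sets $A_1,A_2$ of equal positive measure and put $h=\chi_{A_1}-\chi_{A_2}$. For every $\epsilon\in(0,\eta)$ the pointwise bound $0\le \rho^*\pm\epsilon h\le 1$ holds because $\eta\le\rho^*\le 1-\eta$ on $\mathrm{supp}(h)$, and the mass constraint on each window is preserved because $h$ is supported in the single window $I$ with $\int h=0$. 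Thus $\rho^*\pm\epsilon h\in\mathcal{O}$.

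Expanding the quadratic functional and using symmetry of the kernel,
\[
\Phi(\rho^*\pm\epsilon h)-\Phi(\rho^*)=\pm 2\epsilon L+\epsilon^2 Q,
\]
where
\[
L=\int\!\!\int |x_1-x_2|\rho^*(x_1)h(x_2)\,dx_1dx_2,\qquad Q=\int\!\!\int |x_1-x_2|h(x_1)h(x_2)\,dx_1dx_2.
\]
Once one establishes $Q<0$, the contradiction is immediate: pick the sign of the perturbation opposite to $\sgn(L)$ (either sign when $L=0$); then the right-hand side is strictly negative for small $\epsilon$, contradicting minimality of $\rho^*$.

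The only non-routine step, and the main obstacle, is the strict negativity of $Q$ for mean-zero $h\not\equiv 0$. My plan is to invoke the elementary identity
\[
|x_1-x_2|=\int_{\mathbb{R}}\bigl(\chi_{(-\infty,t]}(x_1)-\chi_{(-\infty,t]}(x_2)\bigr)^2 dt,
\]
which follows from the observation that the integrand equals $\chi_{[\min(x_1,x_2),\max(x_1,x_2))}(t)$. Substituting this into $Q$, using Fubini (justified by the compact support of $h$), expanding the square via $\chi^2=\chi$, and using $\int h=0$ twice, one obtains
\[
Q=-2\int_{\mathbb{R}} H(t)^2\,dt,\qquad H(t)=\int_{-\infty}^{t}h(s)\,ds.
\]
Hence $Q\le 0$, with equality only if $H\equiv 0$, i.e.\ only if $h=0$ a.e., which is excluded by $|A_1|=|A_2|>0$. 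An alternative route is the Fourier representation $|x|=\tfrac{2}{\pi}\int_0^\infty(1-\cos\xi x)\xi^{-2}d\xi$, giving $Q=-\tfrac{2}{\pi}\int_0^\infty|\hat h(\xi)|^2\xi^{-2}d\xi$; the real-variable identity seems cleaner because it avoids low-frequency convergence questions and makes the strict negativity transparent.
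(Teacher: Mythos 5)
Your proof is correct, and it reaches the conclusion by the same overall mechanism as the paper (a second-variation/concavity argument at a non-characteristic minimizer), but the key technical step is carried out differently. The paper establishes concavity of $\Phi$ along every chord of $\mathcal{O}$ by passing to the Fourier side, writing the quadratic form as $-\int |\widehat\delta(k)|^2\,\frac{dk}{2k^2}<0$ and justifying convergence at $k=0$ from $\widehat\delta(0)=0$, $(\widehat\delta)'\in L^\infty$; it then merely asserts the existence of an admissible mean-zero perturbation $\nu$ supported on a set where $\epsilon<\rho^*<1-\epsilon$ and notes that a concave function cannot have an interior local minimum. You make both halves explicit: the perturbation is concretely $h=\chi_{A_1}-\chi_{A_2}$ inside a single window $[j\delta,(j+1)\delta]$, which is exactly what is required to preserve the constraints $\int_{j\delta}^{(j+1)\delta}\rho=\rho_j^{\pm}$ defining $\mathcal{O}$; and the negativity of the quadratic form comes from the layer-cake identity $|x_1-x_2|=\int_{\mathbb{R}}\bigl(\chi_{(-\infty,t]}(x_1)-\chi_{(-\infty,t]}(x_2)\bigr)^2dt$, giving $Q=-2\int H^2$ with $H(t)=\int_{-\infty}^t h$. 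The two computations are Plancherel-equivalent ($\widehat H$ is essentially $\widehat h(k)/(ik)$), but your real-variable version avoids the low-frequency integrability discussion entirely and makes the strictness of $Q<0$ transparent (equality forces $H\equiv 0$, hence $h=0$ a.e., which your choice of $A_1,A_2$ excludes). What the paper's Fourier formulation buys in exchange is concavity along arbitrary chords of $\mathcal{O}$, a fact it reuses later (e.g.\ the inequality $\int\int|x_1-x_2|\delta(x_1)\delta(x_2)\,dx_1dx_2\le 0$ in the proof of the geometric-condition Lemma). Your closing step, choosing the sign of $\pm 2\epsilon L+\epsilon^2 Q$ against that of $L$, is the same midpoint-concavity observation the paper invokes, so no gap remains.
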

\begin{proof}
Notice that if $\rho_0$ and $\rho_1$ belong to $\mathcal{O}$, then
$\rho_t=t\rho_1+(1-t)\rho_0\in \mathcal{O},\, t\in (0,1)$ and
\begin{equation}\label{re}
\Phi''(\rho_t)=\int\int
|x_1-x_2|\delta(x_1)\delta(x_2)dx_1dx_2,\quad \delta=\rho_1-\rho_0.
\end{equation}
Going on the Fourier side, we have
\begin{equation}\label{rre}
\Phi''=-\int |\widehat\delta(k)|^2\frac{dk}{2k^2}<0\,,
\end{equation}
where the last integral makes sense since
$\int_{\mathbb{R}}\delta(x)dx=0, \int_{\mathbb{R}}
|x||\delta(x)|dx<\infty $ and so $\widehat\delta(0)=0,
(\widehat\delta)'\in L^\infty(\mathbb{R})$. That shows concavity of
the function in $t$. Now suppose that $\rho^*$ is not a
characteristic function, e.g., there is $\Sigma\subset I_j$ for some
$I_j$, such that $\epsilon<\rho^*<1-\epsilon$ on $\Sigma$ and
$|\Sigma|>0$ for some positive $\epsilon$. Then one can find
measurable function $\nu$ supported on $\Sigma$ such that
$\int_\mathbb{R}\nu dx=0$ and $\nu_t=\rho^*+t\nu\in \mathcal{O}$ for
$t\in (-\delta_1,\delta_1)$ with some small positive $\delta_1$.
However, the function $\Phi(\nu_t)$ is concave and $t=0$ can not be
a local minimum.
\end{proof}

Two previous Lemmas imply
\begin{lemma} If $\rho$ is measurable, $0\le \rho\le 1$, and
\[
\int_0^\infty  \rho dx=L, \int_{-\infty}^0  \rho dx=L,
\]
then
\begin{equation}\label{raz1}
\Phi(\rho)\ge  \Phi(\chi_{J_0})+CL\int_{\mathbb{R}} ||x|-L|\cdot
|\rho(x)-\chi_{J_0}|dx.
\end{equation}
\end{lemma}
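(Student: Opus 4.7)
The plan is to deduce this inequality from its characteristic-function counterpart, Lemma \ref{one-d}, by a discretization argument that exploits the two preceding lemmas (existence of a minimizer in $\mathcal{O}$ and the fact that the minimizer must be a characteristic function).

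First, I would fix a scale $\delta>0$ chosen so that $L/\delta$ is a positive integer (so that $\pm L$ lie on the grid $\{k\delta\}$). Given the admissible $\rho$ from the statement, define
\[
\rho_j^{+}=\int_{j\delta}^{(j+1)\delta}\rho(x)\,dx,\qquad
\rho_j^{-}=\int_{-(j+1)\delta}^{-j\delta}\rho(x)\,dx,\qquad j=0,1,\ldots
\]
Then $\sum_{j\ge 0}\rho_j^{\pm}=L$ by hypothesis, and $\rho$ belongs to the corresponding convex class $\mathcal{O}$ (with total mass normalized so that each half-line carries mass $L$, not $1$; the preceding lemmas apply verbatim after rescaling). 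By Lemma \ref{triv1} a minimizer $\rho^{*}$ of $\Phi$ over $\mathcal{O}$ exists, and by the lemma just proved it is of the form $\chi_{J^{\delta}}$ for some measurable $J^{\delta}\subset\mathbb{R}$ with $|J^{\delta}|=2L$ and $|(J^{\delta})^{+}|=L$. Lemma \ref{one-d} then gives
\[
\Phi(\rho)\ge\Phi(\chi_{J^{\delta}})\ge\Phi(\chi_{J_0})+CL\int_{J^{\delta}\Delta J_0}||x|-L|\,dx.
\]

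The next step is to compare the right-hand side with the desired expression $\int_{\mathbb{R}}||x|-L|\cdot|\rho-\chi_{J_0}|\,dx$ on a cell-by-cell basis. The key observation is that on each cell $I=[j\delta,(j+1)\delta]$ (and its mirror) one has $\int_I\chi_{J^{\delta}}\,dx=\rho_j^{+}=\int_I\rho\,dx$; since $\pm L$ lie on the grid, each cell is either entirely inside $J_0$ or entirely outside, so
\[
\int_I|\chi_{J^{\delta}}-\chi_{J_0}|\,dx=\int_I|\rho-\chi_{J_0}|\,dx.
\]
Because the weight $w(x)=||x|-L|$ is $1$-Lipschitz, its oscillation on each cell is at most $\delta$, so
\[
\int_I|\rho-\chi_{J_0}|\,w\,dx\le\int_I\chi_{J^{\delta}\Delta J_0}\,w\,dx+\delta\int_I|\rho-\chi_{J_0}|\,dx.
\]
Summing over all cells and using $\int_{\mathbb{R}}|\rho-\chi_{J_0}|\,dx\le 4L$, I obtain
\[
\int_{\mathbb{R}}||x|-L|\cdot|\rho-\chi_{J_0}|\,dx\le\int_{J^{\delta}\Delta J_0}||x|-L|\,dx+4L\delta.
\]

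Combining the two displayed inequalities yields
\[
\Phi(\rho)\ge\Phi(\chi_{J_0})+CL\int_{\mathbb{R}}||x|-L|\cdot|\rho-\chi_{J_0}|\,dx-4CL^{2}\delta,
\]
and sending $\delta\to 0$ through grid values for which $L/\delta\in\mathbb{Z}$ gives \eqref{raz1}. The only delicate point I anticipate is the book-keeping around the endpoints $\pm L$, which is precisely why I pick $\delta$ to divide $L$; with that choice the cell-wise equality of the two $L^{1}$ deficits is exact and the remaining Lipschitz error is harmless in the limit.
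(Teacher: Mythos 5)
Your proposal is correct and follows essentially the same route as the paper: discretize $\rho$ on a grid of step $\delta$, replace it by the characteristic-function minimizer $\chi_{J^\delta}$ of the variational problem over $\mathcal{O}$, apply Lemma \ref{one-d}, and let $\delta\to 0$. Your cell-by-cell comparison (choosing $\delta$ to divide $L$ so that the $L^1$ deficits of $\rho$ and $\chi_{J^\delta}$ against $\chi_{J_0}$ agree exactly on each cell, leaving only a Lipschitz error $O(L\delta)$ from the weight $||x|-L|$) is a cleaner justification of the limiting step than the paper's, which simply asserts the convergence of the weighted integral.
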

\begin{proof}
Indeed, consider the grid $\{j\delta\}, j\in \mathbb{Z}$ with step
$\delta$ and let
\[
\rho_j^+=\int_{j\delta}^{(j+1)\delta} \rho dx, \quad
\rho_j^-=\int_{-(j+1)\delta}^{-j\delta} \rho dx, \quad j=0,1,\ldots
\]
The variational argument given above shows that the value of $\Phi$
will decrease if we replace $\rho$ by a minimizer which needs to be
a characteristic function $\chi_{J}$. By Lemma \ref{one-d}, we  get
\[\Phi(\rho)\ge  \Phi(\chi_{J})\ge \Phi(\chi_{J_0})+CL\int_{J\,\Delta\,J_0}  ||x|-L|dx
=\Phi(\chi_{J_0})+CL\int_{\mathbb{R}} ||x|-L| \cdot
|\chi_{J}-\chi_{J_0}|           dx.\] Sending $\delta\to 0$ and
using
\[
\lim_{\delta\to0} \int_{\mathbb{R}} ||x|-L| \cdot
|\chi_{J}-\chi_{J_0}| dx=\int_{\mathbb{R}} ||x|-L|\cdot
|\rho(x)-\chi_{J_0}|dx,
\]
we get the statement of the Lemma.
\end{proof}

Now we turn our attention to the full energy functional for 2D Euler in $S$. Let
\[
F(\chi_E) = \int_S\int_S \chi_E(z)\chi_E(\xi)
\log(\cosh(x_1-x_2)-\cos(y_1-y_2))dzd\xi, \quad
z=(x_1,y_1),\,\xi=(x_2,y_2).
 \]
Observe that
\begin{equation}\label{repr1}
F(\chi_E)=(2\pi)^2\int_{\mathbb{R}}\int_{\mathbb{R}}
\rho_E(x_1)\rho_E(x_2)|x_1-x_2|dx_1dx_2+F_1(\chi_E)
-\log(2)\|\chi_E\|_{L^1(S)}^2\,,
\end{equation}
where
\[
\rho_E(x)=\frac{1}{2\pi}\int_{-\pi}^\pi \chi_E(x,y)dy
\]
and
\[
F_1(\chi_E)=\int_S\int_S \chi_E(z)\chi_E(\xi)
\log(\cosh(x_1-x_2)-\cos(y_1-y_2))dzd\xi-(2\pi)^2\Phi(\rho_E)+\log(2)\|\chi_E\|_{L^1(S)}^2\,.\]
Here $\Phi$ is defined in \eqref{nfn}.

Assume that $E\subset S$, $|E|=4\pi L$, and $|E\cap \{x>0\}|=2\pi
L$, i.e., $E$ is centered around $0$.

\begin{theorem}\label{dif1}
There is $L_0>2$ such that for every $L>L_0$ we have
\[
|F_1(\chi_E)-F_1(\chi_{E_0})|\lesssim \int_S ||x|-L|
\chi_{E\,\Delta\, E_0}dxdy\,,
\]
where $E_0=[-L,L]\times \mathbb{T}$.
\end{theorem}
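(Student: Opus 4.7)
The plan is to turn the estimate into a negative-definite quadratic form in $h=\chi_E-\chi_{E_0}$ and then control it by Fourier analysis in the periodic variable $y$.

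First I would establish the orthogonality $F_1(\chi_{E_0})=0$. The classical identity $\int_{-\pi}^\pi \log(\cosh x-\cos y)\,dy=2\pi(|x|-\log 2)$ gives $\int_{-\pi}^\pi G(x,y)\,dy=0$ for every $x$, so $\int_{E_0} G(z-\xi)\,d\xi=0$ for every $z\in S$, and hence $F_1(\chi_{E_0})=0$. Expanding $\chi_E\chi_E=(\chi_{E_0}+h)^2$, the cross term $2\iint h(z)\chi_{E_0}(\xi)G(z-\xi)\,dz\,d\xi$ also vanishes by the same mean-zero identity, leaving
\[
F_1(\chi_E)-F_1(\chi_{E_0})=\int_S\int_S h(z)\,h(\xi)\,G(z-\xi)\,dz\,d\xi =: I_1.
\]

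Next I would expand $G$ in Fourier series in $y$. From $G(x,y)=2\log|1-e^{-|x|+iy}|=-2\sum_{n\ge 1}\cos(ny)\,e^{-n|x|}/n$ and Parseval,
\[
I_1=-\sum_{n\ne 0}\frac{1}{|n|}\int_{\mathbb R}\!\int_{\mathbb R} \hat h_n(x_1)\,\overline{\hat h_n(x_2)}\,e^{-|n||x_1-x_2|}\,dx_1\,dx_2,
\]
where $\hat h_n(x)=\int_{-\pi}^\pi h(x,y)\,e^{-iny}\,dy$. The kernel $e^{-|n||\cdot|}$ has positive Fourier transform, so each inner integral is nonnegative; in particular $I_1\le 0$, and the quantitative problem reduces to an upper bound on this negative-definite form.

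For the upper bound on $|I_1|$ I would exploit the pointwise bound $|\hat h_n(x)|\le g(x):=\int_{-\pi}^\pi\chi_{E\,\Delta\,E_0}(x,y)\,dy$ and the fact that the target $\int_S||x|-L|\chi_{E\,\Delta\,E_0}\,dx\,dy$ equals $\int_{\mathbb R}||x|-L|\,g(x)\,dx$. Each summand would be estimated by a Young-type convolution inequality exploiting the exponential decay of the kernel in both $n$ and $|x_1-x_2|$, and the series summed in $n$. The hypothesis $L>L_0$ enters by making left-right cross-interactions negligible (of order $e^{-2|n|L}$), so the estimate localizes near each of $x=L$ and $x=-L$ separately, in the spirit of Lemma~3.1.

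The main obstacle is achieving the \emph{sharp} weighted bound: a naive Young estimate using $|\hat h_n|\le g$ only yields $|I_1|\lesssim\|g\|_{L^1}=|E\,\Delta\,E_0|$, which is much weaker than $\int||x|-L|\chi_{E\,\Delta\,E_0}\,dx\,dy$ (the latter vanishes when $V=E\,\Delta\,E_0$ concentrates on $\{|x|=L\}$, while the former does not). Extracting the extra weight $||x|-L|$ from the quadratic form requires a refined argument: decomposing the Fourier sum by frequency, using the sign structure $h=\chi_{V_{\mathrm{out}}}-\chi_{V_{\mathrm{in}}}$ with $V_{\mathrm{out}}\subset\{|x|>L\}$ and $V_{\mathrm{in}}\subset\{|x|\le L\}$, and exploiting the balance $|V_{\mathrm{in}}|=|V_{\mathrm{out}}|$ forced by $|E|=|E_0|$ and the centering of $E$ at $0$. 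This last step is the technical heart of the proof.
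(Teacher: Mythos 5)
Your setup is correct and coincides with the paper's: the kernel $K(z,\xi)=\log(\cosh(x_1-x_2)-\cos(y_1-y_2))-|x_1-x_2|+\log 2$ has zero mean in the periodic variable, so $F_1(\chi_{E_0})=0$, the cross term vanishes, and the difference reduces to the quadratic form $\int_S\int_S K(z,\xi)h(z)h(\xi)\,dz\,d\xi$ with $h=\chi_E-\chi_{E_0}$. Your Fourier expansion and the observation that the form is negative semi-definite are also correct (and explain why the $n=0$ mode drops out). You also correctly diagnose that the naive bound $|I_1|\lesssim|E\,\Delta\,E_0|$ (or the slightly better $|A|^2(1+|\log|A||)$ with $A=E\,\Delta\,E_0$) does not imply the weighted estimate when $A$ concentrates near $\{|x|=L\}$.

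But that diagnosis is where your proof stops, and the step you defer --- ``extracting the extra weight $||x|-L|$'' --- is the entire content of the theorem; everything before it is routine. Moreover, the ingredients you propose for closing the gap (frequency decomposition, the sign structure $h=\chi_{V_{\rm out}}-\chi_{V_{\rm in}}$, the mass balance $|V_{\rm in}|=|V_{\rm out}|$) point in a direction the proof does not need and that you give no evidence would work. The paper proves the strictly stronger statement $\int_A\int_A|K(z,\xi)|\,dz\,d\xi\lesssim\int_A\,||x|-L|\,dx\,dy$ for an \emph{arbitrary} measurable $A\subset S$, using no cancellation from the signs of $h$ at all. The mechanism is: (i) for $|A|\ge 1$, or for the part of $A$ at distance $\ge 1$ from $\{|x|=L\}$, the trivial bound $\int_A\int_A|K|\lesssim|A|$ already suffices because $\int_A||x|-L|\gtrsim\min(|A|,|A|^2)$ by a bathtub (rearrangement) argument; (ii) the interaction between the two components near $x=L$ and $x=-L$ is controlled by $|A_1^+|\,|A_1^-|\le\tfrac12(|A_1^+|^2+|A_1^-|^2)$ since $L>L_0>2$ keeps them far apart; (iii) the genuinely hard case --- $A$ small and concentrated in $\{||x|-L|<1\}$ --- is Lemma \ref{lg}: $\int_A\int_A|\log|z-\xi||\,dz\,d\xi\lesssim\int|x|\chi_A\,dx\,dy$ for $A\subset[-1,1]\times\mathbb{T}$. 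That lemma is proved by slicing $A$ into vertical strips of width $\epsilon=|A|$, bounding the potential $U(z)=\int_A|\log|z-\xi||\,d\xi$ strip by strip via the observation that the maximizing configuration in each strip is a rectangle, and then absorbing the resulting logarithms through the elementary inequality $\sum_j I_j|\log I_j|\lesssim 1+\sum_j|j|I_j$ (H\"older with $|u\log u|\le C_\gamma u^\gamma$). Without an argument of this type --- or a worked-out substitute --- your proposal does not establish the theorem.
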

\begin{proof}
Consider $f\in L^1(S)\cap L^\infty(S)$ and let $f_0=\chi_{E_0}$. If
$f=f_0+h$, we have
\[
F_1(f)=\int_S\int_S K(z,\xi)(f_0(z)+h(z))(f_0(\xi)+h(\xi))dzd\xi
\]
and \begin{equation}\label{yadro}
K(z,\xi)=\log(\cosh(x_1-x_2)-\cos(y_1-y_2))-|x_1-x_2| +\log 2.
\end{equation}
 Notice
that $K$ is symmetric and
\begin{equation}\label{ala}
\int_S K(z,\xi)f_0(\xi)d\xi=0.
\end{equation}
Indeed,
\[
\int_{-\pi}^\pi\log(\cosh(x_1-x_2)-\cos(y_1-y_2))dy_2=\int_{-\pi}^\pi
\log \left(\frac{\kappa^2+1}{2\kappa}-\cos y\right)dy\,,
\]
where  $\kappa\geq 1$ solves equation
\[
\frac{\kappa^2+1}{2\kappa}=\cosh(x_1-x_2).
\]
Clearly, $\kappa=e^{|x_1-x_2|}$. We continue as
\[
\int_{-\pi}^\pi \log \left(\frac{\kappa^2+1}{2\kappa}-\cos
y\right)dy=2\int_{-\pi}^\pi\log|\kappa-e^{iy}|dy-\int_{-\pi}^\pi\log(2\kappa)dy.
\]
Function $\log|\kappa-z|$ is harmonic in $z$ in the unit disc and
the mean-value theorem for harmonic functions gives
\[2\int_{-\pi}^\pi\log|\kappa-e^{iy}|dy-\int_{-\pi}^\pi\log(2\kappa)dy=
2\pi\log\kappa-2\pi\log 2= 2\pi|x_1-x_2|-2\pi \log 2 \,.
\] 
Now, \eqref{ala} easily follows.

 Then,
\[F_1(\chi_E)-F_1(\chi_{E_0}) = \int_S\int_S K(z,\xi)h(z)h(\xi)dzd\xi
\]
with $h=\chi_E-\chi_{E_0}$. Notice that
$|h|=|\chi_E-\chi_{E_0}|=\chi_{E\,\Delta \,E_0}$ and let
$A=E\,\Delta\,E_0$. So, it is sufficient to control $\int_A\int_A
|K(z,\xi)|dzd\xi$ to complete the proof. We turn now to the kernel,
$K(z,\xi)$. The following is immediate: $K(z,\xi)$ is translation
invariant, i.e., $K(z,\xi)=K(z-\xi,0)$, and also
\begin{equation}\label{es33}
|K(z,0)|\lesssim \left\{
\begin{array}{cc}
e^{-0.1|z|}, & |z|>1\\
1+|\log|z||,& |z|<1
\end{array}\quad 
\right.
\end{equation}
for every $z\in S$.
 We have the following trivial bound
\begin{equation}\label{b22}
\int_A\int_A |K(z,\xi)|dzd\xi\lesssim |A|\cdot \min(1,|A|\cdot
(1+|\log |A||)),
\end{equation}
where we took into account two estimates:
\begin{equation}\label{b00}
\int_S |K(z,0)|dz<C
\end{equation}
and
\begin{equation}\label{b23}
\quad \left|\int_{A}\min\{1,1+|\log|\xi||\}d\xi\right|\lesssim
|A|\cdot |\log|A||,{\,\,\rm provided\, that}\, \, |A|<0.5.
\end{equation}
The last bound follows from the observation that the maximizer for
that integral is the ball centered at the origin.\smallskip

 Given $A$, we have two cases:

1. $|A|>1$. Then
\[
\int_A\int_A |K(z,\xi)|dzd\xi\lesssim  |A|\lesssim \int_{A}
||x|-L|dxdy\,,
\]
where the last inequality holds for all  $A\subset S$ satisfying
$|A|\ge 1$ (see, e.g., Lemma \ref{petal} from Appendix C).

2. $|A|\le 1$. Then, we write $A=A_1\cup A_2$ where $A_1=A\cap
\{||x|-L|<1\}$.\smallskip

Consider $A_1$. If $A_1=A_1^+\cup A_1^-, \,A_1^+=A_1\cap
\{|x-L|<1\},\, A_1^-=A_1\cap \{|x+L|<1\}  $, then
\[
\int_{A_1}\int_{A_1}|K(z,\xi)|dzd\xi\lesssim \int_{A_1} ||x|-L|dxdy
\]
by Lemma \ref{sm1} and Lemma \ref{lg} that are proved in Appendix C.
The remaining terms are then bounded using \eqref{b00} as follows
\[
\int_{A_2}\int_{A_2}|K(z,\xi)|dzd\xi\lesssim |A_2|\lesssim
\int_{A_2} ||x|-L|dxdy
\]
and similarly
\[
\int_{A_1}\int_{A_2} |K(z,\xi)|dzd\xi\lesssim |A_2|\lesssim
\int_{A_2}||x|-L|dxdy.
\]
The proof of Theorem \ref{dif1} is now completed.
\end{proof}
We define $\cal{M}$ as the collection of measurable sets $E\subset
S$ such that $E$ is centered around the origin, $|E|=4\pi L$.
Theorem \ref{dif1} along with \eqref{raz1} and \eqref{repr1} give
\begin{corollary}\label{argun}
There is $L_0>2$ such that for every $L>L_0$ and  $E\in \cal{M}$, we
have
\[
F(\chi_E)\ge F(\chi_{E_0})+CL\int_{E\,\Delta\,E_0} ||x|-L|dxdy.
\]
Therefore, $\arg\min\limits_{E\in \cal{M}}F(\chi_E)=E_0=[-L,L]\times
\mathbb{T}$.
\end{corollary}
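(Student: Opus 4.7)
My plan is to assemble the corollary from three ingredients already in place: the decomposition \eqref{repr1}, the one-dimensional lower bound \eqref{raz1} on the functional $\Phi$, and the kernel estimate in Theorem \ref{dif1}. The key bridge between them is a sign-coherence identity that converts the averaged control produced by $\Phi(\rho_E)$ into the full strip control $\chi_{E\,\Delta\,E_0}$ appearing in the statement.

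First I would apply \eqref{repr1} to both $\chi_E$ and $\chi_{E_0}$ and subtract. Since $E\in\cal{M}$ satisfies $\|\chi_E\|_{L^1(S)}=|E|=4\pi L=|E_0|=\|\chi_{E_0}\|_{L^1(S)}$, the $\log 2$ mass-squared terms cancel, and using $\rho_{E_0}=\chi_{J_0}$ we are left with
\[
F(\chi_E)-F(\chi_{E_0})=(2\pi)^2\bigl(\Phi(\rho_E)-\Phi(\chi_{J_0})\bigr)+\bigl(F_1(\chi_E)-F_1(\chi_{E_0})\bigr).
\]
Because $E$ is centered around the origin, $\rho_E$ is measurable, $0\le\rho_E\le 1$, and $\int_0^\infty\rho_E\,dx=\int_{-\infty}^0\rho_E\,dx=L$, so \eqref{raz1} applies with $\rho=\rho_E$ and yields
\[
\Phi(\rho_E)-\Phi(\chi_{J_0})\ge CL\int_{\mathbb{R}}||x|-L|\cdot|\rho_E(x)-\chi_{J_0}(x)|\,dx.
\]

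Next I would observe that the sign of $\chi_E(x,y)-\chi_{E_0}(x,y)$ is constant along each vertical slice: for $|x|<L$ we have $\chi_{E_0}\equiv 1$ so the difference is nonpositive, while for $|x|>L$ we have $\chi_{E_0}\equiv 0$ so the difference is nonnegative. Consequently, integration in $y$ produces no cancellation, yielding the exact identity
\[
|\rho_E(x)-\chi_{J_0}(x)|=\frac{1}{2\pi}\int_{-\pi}^\pi|\chi_E(x,y)-\chi_{E_0}(x,y)|\,dy=\frac{1}{2\pi}\int_{-\pi}^\pi\chi_{E\,\Delta\,E_0}(x,y)\,dy.
\]
Multiplying by $||x|-L|$ and integrating in $x$ upgrades the $\Phi$ bound to
\[
\Phi(\rho_E)-\Phi(\chi_{J_0})\ge\frac{CL}{2\pi}\int_S||x|-L|\chi_{E\,\Delta\,E_0}\,dxdy.
\]

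Finally, Theorem \ref{dif1} controls the remainder by $|F_1(\chi_E)-F_1(\chi_{E_0})|\le C'\int_S||x|-L|\chi_{E\,\Delta\,E_0}\,dxdy$ with $C'$ independent of $L$. Combining the two estimates,
\[
F(\chi_E)-F(\chi_{E_0})\ge(2\pi CL-C')\int_S||x|-L|\chi_{E\,\Delta\,E_0}\,dxdy,
\]
which is the claimed inequality as soon as $L_0$ is chosen so that $2\pi CL_0-C'\ge\pi CL_0$. The $\arg\min$ statement is then immediate, since the right-hand side is strictly positive whenever $|E\,\Delta\,E_0|>0$. I expect the only real subtlety to be the sign-coherence identity: it must be an equality rather than an inequality, since any slack there would cost the factor of $L$ and defeat the combination with Theorem \ref{dif1}.
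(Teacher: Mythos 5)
Your proof is correct and is essentially the argument the paper intends: the corollary is presented there as a direct combination of the decomposition \eqref{repr1}, the averaged lower bound \eqref{raz1}, and Theorem \ref{dif1}, with the factor of $L$ in the $\Phi$-estimate absorbing the $L$-independent bound on $F_1(\chi_E)-F_1(\chi_{E_0})$ once $L$ is large. Your sign-coherence identity, showing that $|\rho_E-\chi_{J_0}|$ equals the exact vertical average of $\chi_{E\,\Delta\,E_0}$ because $\chi_E-\chi_{E_0}$ has a fixed sign on each vertical slice, is precisely the (unstated) detail that makes this combination close without losing the factor of $L$.
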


Now we are ready to apply our estimates to the Euler dynamics. We
will start by proving the following Theorem which is identical to
Theorem \ref{main} except that the estimate \eqref{pce1} for points
of centering is missing.
\begin{theorem}\label{m3}

There is an absolute constant $L_0>2$  such that the following
statement is true. If
\begin{enumerate}
\item[(a)]
$L>L_0$, $\epsilon<1$,
\item[(b)]
 $E$ is a compact subset of $S$ and $0$ is one of its
points of centering,
\item[(c)]
 $|E|=4L\pi$,
 \item[(d)] the regularized energy satisfies
\begin{equation}\label{sma1}
F(\chi_E)=F(\chi_{E_0})+O(L\epsilon^2),
\end{equation}
\item[(e)] function $\theta$ solves 2D Euler equation \eqref{ecyl} with the Biot-Savart law given
by \eqref{strcyl} and \eqref{bscyl},
\end{enumerate} then
$\theta(t)=\chi_{E(t)}$ and $E(t)$ satisfies
\begin{eqnarray}\label{lim1}
\int_S ||x-x_c(t)|-L|\chi_{E(t)\, \Delta\, E_0(t)}dxdy\lesssim
\epsilon^2
\end{eqnarray}
for all $t>0$.  Above $x_c(t)$ is any point of centering for $E(t)$
and $E_0(t)=[x_c(t)-L,x_c(t)+L]\times \mathbb{T}$.

Moreover, if  $\mu>\epsilon$, then
\[
|(E(t)\,\Delta\, E_0(t))\cap \{||x-x_c(t)|-L|>\mu\}|\lesssim
\epsilon^2\mu^{-1}.
\]
\end{theorem}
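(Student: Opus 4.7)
The plan is to reduce Theorem \ref{m3} to the static variational estimate in Corollary \ref{argun} by exploiting two dynamical facts: the regularized energy $F$ is conserved along solutions, and its kernel $\log(\cosh(x_1-x_2)-\cos(y_1-y_2))$ is translation invariant in $x$. I would take $L_0$ to be the constant produced by Corollary \ref{argun}.

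First, by Theorem \ref{eandu} the unique solution to \eqref{ecyl} with initial data $\chi_E$ has the form $\theta(t)=\chi_{E(t)}$ for a compact set $E(t)\subset S$, and since vorticity is transported along particle trajectories, $|E(t)|=|E|=4\pi L$. I would then fix any point of centering $x_c(t)$ of $E(t)$ and let $\widetilde E(t)=\{(x-x_c(t),y):(x,y)\in E(t)\}$. By construction $|\widetilde E(t)|=4\pi L$ and $0$ is a point of centering for $\widetilde E(t)$, so $\widetilde E(t)\in\cal{M}$. Translation invariance of the kernel gives $F(\chi_{\widetilde E(t)})=F(\chi_{E(t)})$.

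Combining conservation of $F$ (Proposition 2.1) with hypothesis (d),
\[
F(\chi_{\widetilde E(t)})=F(\chi_{E(t)})=F(\chi_E)=F(\chi_{E_0})+O(L\epsilon^2),
\]
while Corollary \ref{argun} applied to $\widetilde E(t)\in\cal{M}$ yields
\[
F(\chi_{\widetilde E(t)})\ge F(\chi_{E_0})+CL\int_{\widetilde E(t)\,\Delta\,E_0}||x|-L|\,dxdy.
\]
Subtracting gives $\int_{\widetilde E(t)\,\Delta\,E_0}||x|-L|\,dxdy\lesssim\epsilon^2$, and the change of variables $x\mapsto x+x_c(t)$ turns this precisely into \eqref{lim1}. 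For the measure bound, Chebyshev's inequality applied to \eqref{lim1} gives, for any $\mu>\epsilon$,
\[
\mu\,\bigl|(E(t)\,\Delta\,E_0(t))\cap\{||x-x_c(t)|-L|>\mu\}\bigr|\le\int_{E(t)\,\Delta\,E_0(t)}||x-x_c(t)|-L|\,dxdy\lesssim\epsilon^2,
\]
which is the second inequality of the theorem.

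All of the genuine difficulty has already been absorbed into Corollary \ref{argun}, which combines the one-dimensional convexity lemma with the perturbative control of $F_1$. I therefore do not expect a real obstacle at the dynamical step: the only items to verify are translation invariance of $F$ (immediate from the kernel depending only on $x_1-x_2$ and $y_1-y_2$), membership of $\widetilde E(t)$ in $\cal{M}$ (immediate from the definition of a centering point together with conservation of area), and the fact that $F(\chi_{E(t)})$ is genuinely time-independent for the patch solution -- and this last point has already been established by the mollification argument used to prove Proposition 2.1.
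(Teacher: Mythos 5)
Your proposal is correct and follows essentially the same route as the paper: conservation of $F(\chi_{E(t)})$ and $|E(t)|$, translation to a set centered at the origin so that Corollary \ref{argun} applies, and Chebyshev's inequality for the final measure bound. In fact you supply more detail than the paper's own one-line proof, which simply invokes the invariants and Corollary \ref{argun} and states that ``the statements follow.''
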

\begin{proof}
Notice that 
 $F(\chi_{E(t)})$ and $|E(t)|$ are invariants.
Therefore, Corollary \ref{argun} gives
\[
L\int_S ||x-x_c(t)|-L|\chi_{E(t)\, \Delta\, E_0(t)}dxdy\lesssim
L\epsilon^2
\]
and the statements follow.
\end{proof}

The following Lemma gives a simple geometric condition for
\eqref{sma1} to hold.

\begin{lemma}If $E$ is centered around
the origin, $|E|=4L\pi$, and $\{|x|<L-\epsilon\}\subseteq E\subseteq
\{|x|<L+\epsilon\}$, then
\begin{equation}\label{sma}
F(\chi_E)=F(\chi_{E_0})+O(L\epsilon^2).
\end{equation}
\end{lemma}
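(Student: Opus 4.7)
The plan is to start from the decomposition \eqref{repr1}. Since $|E|=|E_0|=4\pi L$, the $\log(2)\|\chi_E\|_{L^1(S)}^2$ contributions cancel and
\[
F(\chi_E)-F(\chi_{E_0}) = (2\pi)^2\bigl(\Phi(\rho_E)-\Phi(\rho_{E_0})\bigr)+\bigl(F_1(\chi_E)-F_1(\chi_{E_0})\bigr).
\]
The $F_1$ piece is handled immediately by Theorem \ref{dif1}: our hypothesis forces $E\,\Delta\,E_0\subseteq\{L-\epsilon\le|x|\le L+\epsilon\}$, so $||x|-L|\le \epsilon$ on $E\,\Delta\,E_0$ and $|E\,\Delta\,E_0|\le 8\pi\epsilon$, giving $|F_1(\chi_E)-F_1(\chi_{E_0})|\lesssim \epsilon^2$.

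To handle the $\Phi$ difference, I would set $\delta=\rho_E-\rho_{E_0}$, where $\rho_{E_0}=\chi_{[-L,L]}$. Because $\{|x|<L-\epsilon\}\subseteq E\subseteq\{|x|<L+\epsilon\}$, we have $|\delta|\le 1$ and $\mathrm{supp}\,\delta\subseteq\{L-\epsilon\le|x|\le L+\epsilon\}$. Moreover, the centering condition together with $|E|=4\pi L$ yields $\int_0^\infty \delta\,dx=\int_{-\infty}^0 \delta\,dx=0$. Expanding the quadratic functional,
\[
\Phi(\rho_E)-\Phi(\rho_{E_0}) = 2\int_{\R}\delta(x_1)g(x_1)\,dx_1+\int_{\R}\int_{\R}|x_1-x_2|\delta(x_1)\delta(x_2)\,dx_1dx_2,
\]
where $g(x_1):=\int_{-L}^L|x_1-x_2|\,dx_2$. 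The double integral is bounded by $(2L+2\epsilon)\bigl(\int|\delta|\bigr)^2\lesssim L\epsilon^2$, since $\int|\delta|\le 4\epsilon$.

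For the linear term, $g$ is explicit: $g(x_1)=2L|x_1|$ for $|x_1|\ge L$ and $g(x_1)=L^2+x_1^2$ for $|x_1|\le L$. Hence $g\in C^1(\R)$ with $g'(\pm L)=\pm 2L$, and one verifies directly that $g(x_1)=g(\pm L)+g'(\pm L)(x_1\mp L)+r(x_1)$ with $|r(x_1)|\le (x_1\mp L)^2$ on $|x_1\mp L|\le\epsilon$. Using $\int_{L-\epsilon}^{L+\epsilon}\delta\,dx=0$, the constant term $g(L)$ drops out, leaving
\[
\int_{L-\epsilon}^{L+\epsilon}\delta(x_1)g(x_1)\,dx_1=2L\int_{L-\epsilon}^{L+\epsilon}(x_1-L)\delta(x_1)\,dx_1+O(\epsilon^3)=O(L\epsilon^2),
\]
and the analogous identity holds near $-L$. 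Combining the two bounds completes the proof.

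The only slightly delicate step is securing the exact factor $L\epsilon^2$ in the linear term; this hinges on the fact that $g\in C^1$ across $\pm L$ (so only a single nontrivial Taylor coefficient, of size $L$, survives) together with the half-line mean zero property of $\delta$ (which kills the leading constant piece $g(\pm L)$). The rest is a coarse size estimate combined with a direct application of Theorem \ref{dif1}.
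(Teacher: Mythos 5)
Your proposal is correct and follows essentially the same route as the paper: the decomposition \eqref{repr1}, Theorem \ref{dif1} for the $F_1$ piece, and a perturbative expansion of $\Phi$ about $\chi_{J_0}$ using the half-line mean-zero property of $\delta$ to kill the constant term in the cross product. Your Taylor expansion of $g$ at $\pm L$ is just a slightly more explicit version of the paper's replacement of $|x_1-x_2|$ by $|(\pm L)-x_2|$ with pointwise error $\epsilon$, and your crude two-sided bound $|\iint|x_1-x_2|\delta(x_1)\delta(x_2)|\lesssim L\epsilon^2$ on the quadratic term is if anything a bit more careful than the paper's one-sided observation that this term is $\le 0$.
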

\begin{proof} Consider the representation \eqref{repr1} for $E$ and compare it to the same representation for $E_0$. For the
second term, we use Theorem \ref{dif1} to get
\[
|F_1(\chi_E)-F_1(\chi_{E_0})|\lesssim \int_S ||x|-L|
\chi_{E\,\Delta\, E_0}dxdy\lesssim \epsilon^2.
\]
If we write $\rho_E=\chi_{J_0}+\delta$, then the first term in
\eqref{repr1} gives
\[
\int_{\mathbb{R}}\int_{\mathbb{R}}
\rho_E(x_1)\rho_E(x_2)|x_1-x_2|dx_1dx_2=\int_{\mathbb{R}}\int_{\mathbb{R}}
(\chi_{J_0}(x_1)+\delta(x_1))(\chi_{J_0}(x_2)+\delta(x_2))|x_1-x_2|dx_1dx_2
\]
with $\|\delta\|_{L^\infty}\lesssim  1$, $\int_{\mathbb{R}^+} \delta
dx=\int_{\mathbb{R}^-} \delta dx  =0,$ and ${\rm
supp}\,\delta\subseteq \{L-\epsilon<|x|<L+\epsilon\}$. Notice that
(see, e.g., \eqref{re}, \eqref{rre})
\[
\int_{\mathbb{R}}\int_{\mathbb{R}}
\delta(x_1)\delta(x_2)|x_1-x_2|dx_1dx_2\leq 0.
\]
For the cross product,
\[
\int_{\mathbb{R}}\int_{\mathbb{R}}
\delta(x_1)\chi_{J_0}(x_2)|x_1-x_2|dx_1dx_2=\int_{-L-\epsilon}^{-L+\epsilon}
\delta(x_1)\left(\int_{-L}^L
|x_1-x_2|dx_2\right)dx_1+\int_{L-\epsilon}^{L+\epsilon}
\delta(x_1)\left(\int_{-L}^L |x_1-x_2|dx_2\right)dx_1\,.
\]
Consider, e.g., the first integral. We have
\[
||x_1-x_2|-|(-L)-x_2||\le |x_1+L|\le \epsilon
\]
and, therefore,
\[
\int_{-L-\epsilon}^{-L+\epsilon} \delta(x_1)\left(\int_{-L}^L
|x_1-x_2|dx_2\right)dx_1=O(L\epsilon^2)+\int_{-L-\epsilon}^{-L+\epsilon}
\delta(x_1)\left(\int_{-L}^L |(-L)-x_2|dx_2\right)dx_1\,.
\]
Similarly,
\[
\int_{L-\epsilon}^{L+\epsilon} \delta(x_1)\left(\int_{-L}^L
|x_1-x_2|dx_2\right)dx_1=O(L\epsilon^2)+\int_{L-\epsilon}^{L+\epsilon}
\delta(x_1)\left(\int_{-L}^L |L-x_2|dx_2\right)dx_1\,.
\]
However,
\[
\int_{-L-\epsilon}^{-L+\epsilon} \delta(x_1)\left(\int_{-L}^L
|(-L)-x_2|dx_2\right)dx_1=\int_{L-\epsilon}^{L+\epsilon}
\delta(x_1)\left(\int_{-L}^L |L-x_2|dx_2\right)dx_1=0\,,
\]
because $\int_{\mathbb{R}^{\pm}} \delta dx=0$ and we have the
statement of the Lemma.
\end{proof}

To complete the proof of Theorem \ref{main}, we are left with studying the dynamics of $x_c(t)$, the point of
centering.

\begin{lemma}\label{lims} In the previous Theorem \ref{m3}, a centering point $x_c(t)$ satisfies
\[
|x_c(t)|\lesssim L^{-1}\epsilon^2
\]
for all times.
\end{lemma}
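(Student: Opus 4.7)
The plan is to exploit the conservation of the horizontal center of mass $x_0 = \int_S x\,\theta(z,t)\,dz$ (proved in the Preliminaries), combined with the bound \eqref{lim1} from Theorem \ref{m3}, and to argue that $x_0$ essentially equals the center of mass of $E_0(t)$ up to an $O(\epsilon^2)$ error. Since the center of mass of $E_0(t)=[x_c(t)-L,x_c(t)+L]\times\mathbb{T}$ equals exactly $4\pi L\cdot x_c(t)$, this will pin down $x_c(t)$.

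Concretely, I would first write
\[
x_0=\int_S x\,\chi_{E(t)}\,dz=\int_S x\,\chi_{E_0(t)}\,dz+\int_S x\,(\chi_{E(t)}-\chi_{E_0(t)})\,dz=4\pi L\,x_c(t)+R(t),
\]
and the goal reduces to showing $|R(t)|\lesssim\epsilon^2$. Making the substitution $y=x-x_c(t)$ and using $|E(t)|=|E_0(t)|=4\pi L$ to kill the constant piece $x_c(t)\int(\chi_{E(t)}-\chi_{E_0(t)})\,dz=0$, I obtain
\[
R(t)=\int_S y\,(\chi_{E(t)}-\chi_{E_0(t)})\,dx\,dy.
\]

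The key step is to realize that because $x_c(t)$ is a point of centering for $E(t)$ and, by construction, also for $E_0(t)$, the integral of $\chi_{E(t)}-\chi_{E_0(t)}$ over each half-plane $\{y>0\}$ and $\{y<0\}$ vanishes separately. Thus I may replace $y$ by $y-L$ on $\{y>0\}$ and by $y+L$ on $\{y<0\}$ without changing the integral. On the symmetric difference restricted to $\{y>0\}$, $E(t)\setminus E_0(t)$ lies in $\{y>L\}$ and $E_0(t)\setminus E(t)$ lies in $\{0<y<L\}$, so $(y-L)(\chi_{E(t)}-\chi_{E_0(t)})$ is nonnegative and equals $||y|-L|\,\chi_{(E(t)\Delta E_0(t))\cap\{y>0\}}$; an analogous sign computation works on $\{y<0\}$. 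Combining the two pieces gives
\[
|R(t)|\le\int_S\bigl||x-x_c(t)|-L\bigr|\,\chi_{E(t)\,\Delta\, E_0(t)}\,dx\,dy\lesssim\epsilon^2
\]
by Theorem \ref{m3}.

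Finally, applying this at $t=0$, where $x_c(0)=0$ by hypothesis, immediately yields $|x_0|=|R(0)|\lesssim\epsilon^2$. Substituting back into $x_0=4\pi L\,x_c(t)+R(t)$ gives $|x_c(t)|\lesssim L^{-1}\epsilon^2$ for every $t\ge 0$. The only delicate point is the sign bookkeeping in the second paragraph; once the centering property is used to eliminate the constant "$L$" contribution, the bound \eqref{lim1} does all the work, so I do not anticipate any genuine obstacle.
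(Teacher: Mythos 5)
Your proof is correct and follows essentially the same route as the paper: conservation of the horizontal center of mass, the vanishing of $\int(\chi_{E(t)}-\chi_{E_0(t)})$ over each half-plane $\{x\gtrless x_c(t)\}$ so that the weight $x-x_c(t)$ can be replaced by $x-x_c(t)\mp L$, and then the bound \eqref{lim1}. The only (minor, welcome) differences are that you bound the conserved center of mass by $O(\epsilon^2)$ via the same computation at $t=0$, where the paper simply asserts it equals zero, and that your sign analysis on the symmetric difference, while correct, is unnecessary since taking absolute values already suffices.
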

\begin{proof}
 For the patch $\chi_E(t)$ we have also that the $x$-coordinate of the center of mass is conserved and equal to zero, so:
 \[\int_S (x-x_c(t))\chi_{E(t)} (z) dxdy = -4\pi L x_c(t).\] It suffices to bound the
 left hand side by $\epsilon^2$. Recall $E_0(t)=[x_c(t)-L,x_c(t)+L]\times \mathbb{T}$ observe that
 \[\int_S (x-x_c(t))\chi_{E(t)} (z) dxdy = \int_S (x-x_c(t))(\chi_{E(t)}(z)-\chi_{E_0(t)}(z)) dxdy. \]
We use the fact that $x_c(t)$ is the centering point for both $E(t)$
and $E_0(t)$ to write
\[
\int_{x>x_c(t)} (x-x_c(t))(\chi_{E(t)}(z)-\chi_{
E_0(t)}(z))dxdy=\int_{x>x_c(t)}
(x-x_c(t)-L)(\chi_{E(t)}(z)-\chi_{E_0(t)}(z)) dxdy\le
\]
\[\int_{x>x_c} |x-x_c(t)-L| \cdot |\chi_{E(t)}(z)-\chi_{E_0(t)}(z)| dxdy =\int_{x>x_c} |x-x_c(t)-L| \cdot \chi_{E(t)\Delta
E_0(t)}(z) dxdy \lesssim \epsilon^2\] as follows from \eqref{lim1}.
The integral over $x<x_c(t)$ is handled similarly. Thus,
 $ |x_c(t)|\lesssim L^{-1}\epsilon^2.$
\end{proof}

The Theorem \ref{m3} and Lemma \ref{lims} give the proof of Theorem
\ref{main}.

\bigskip

\begin{appendix}
\section{Existence and Uniqueness of Solution on $S$}\label{appe}

Now we will discuss the existence and uniqueness result stated in
Section 2.
\begin{theorem}
 Let $\theta_0(z)$ for $z\in S= \R\times\mathbb{T}$ be in $L^\infty (S)$ with compact support in $S$. Then
  there exists unique $(u,\theta)$ with $u\in L^\infty(S)$ and $\theta\in L^\infty(S)$ with compact support
  in $S$ such that $\partial_t\theta + u\cdot\nabla\theta =0$ in the sense of distributions with
  \[u(z,t)=\nabla^\perp  (\Gamma*\theta) = \int_S \frac{(-\sin(y-\xi_2),\sinh(x-\xi_1))}{2(\cosh(x-\xi_1)-\cos(y-\xi_2))}\theta(\xi,t)d\xi\]
  and $\theta(z,0) =\theta_0(z)$.
\end{theorem}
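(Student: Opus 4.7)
The plan is to reduce the problem on the strip $S$ to the theory of bounded-vorticity 2D Euler on $\mathbb{R}^2$ developed by Serfati and refined by Kelliher, which the authors have already cited as the input to this appendix. Specifically, I would periodize $\theta_0$ in the $y$-variable to obtain $\widetilde\theta_0\in L^\infty(\mathbb{R}^2)$, bounded by $\|\theta_0\|_{L^\infty(S)}$ and supported in a horizontal slab $[-R,R]\times\mathbb{R}$. I then invoke the bounded-vorticity Euler well-posedness result on the plane to produce a unique pair $(\widetilde u,\widetilde\theta)\in L^\infty(\mathbb{R}^2)\times L^\infty(\mathbb{R}^2)$ (using a Serfati-type renormalized Biot-Savart identity) solving $\partial_t\widetilde\theta+\widetilde u\cdot\nabla\widetilde\theta=0$ in the distributional sense.

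Next I would transfer this back to the cylinder. Because the planar Euler equation is translation-invariant and the initial datum $\widetilde\theta_0$ is $2\pi$-periodic in $y$, uniqueness in the Serfati class forces $\widetilde\theta(\cdot,t)$ to remain $2\pi$-periodic in $y$, so it descends to a function $\theta(\cdot,t)$ on $S$. The boundedness of $\widetilde u$ implies finite propagation speed, so $\theta(\cdot,t)$ inherits compact support on $S$ for every $t$. It remains to check that $\widetilde u$ coincides with the cylindrical Biot-Savart velocity $\nabla^\perp(\Gamma*\theta)$ from \eqref{bscyl}. Formally, the cylindrical fundamental solution satisfies $\Gamma(x,y)=\sum_{n\in\mathbb{Z}}\frac{1}{2}\log((x)^2+(y-2\pi n)^2)+\mathrm{const}$ after an appropriate renormalization, so convolving $\Gamma$ with $\theta$ on $S$ should reproduce, up to an additive constant and a harmonic term, the Serfati renormalization applied to $\widetilde\theta$ on $\mathbb{R}^2$. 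Taking $\nabla^\perp$ eliminates the additive constant, and the decay condition at $x=\pm\infty$ in \eqref{strcyl}, together with the uniqueness Lemma already proved in the paper, pins down the remaining harmonic ambiguity.

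The main technical obstacle is precisely this last identification of the two Biot-Savart laws. The Serfati formulation of the velocity involves a cutoff subtraction that makes the improper integral converge for merely bounded $\widetilde\theta$; one has to verify by an explicit calculation, exploiting the exponential decay of the cylindrical kernel $k_1(z)=k(z)-(0,\sgn x)$ shown in \eqref{ubd} and the fact that only finitely many $y$-periods contribute meaningfully at each point, that summing the Serfati kernel over the $2\pi\mathbb{Z}$-orbit in $y$ yields exactly the cylindrical kernel $k(x,y)$ plus the linear piece $(0,\sgn x)/2$ that encodes the counter-rotating boundary behavior. Once this matching is in place, existence of $\theta$ on $S$ follows from existence of $\widetilde\theta$ on $\mathbb{R}^2$, and uniqueness on $S$ follows because any two candidate solutions on $S$ periodize to candidate solutions on $\mathbb{R}^2$ in the Serfati class and must therefore agree.
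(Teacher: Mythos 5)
Your proposal is correct and follows essentially the same route as the paper: periodize in $y$, invoke the Kelliher/Serfati bounded-vorticity well-posedness on $\R^2$, use uniqueness to propagate periodicity, identify the renormalized planar Biot--Savart law with the cylindrical one by summing the planar kernel over the $2\pi\mathbb{Z}$-orbit (the paper carries this out explicitly via Poisson summation), and use the boundary/symmetry conditions on $\Psi$ to eliminate the residual drift $U_\infty(t)$. The only difference is that the paper makes the kernel identification fully explicit as a lemma, while you leave it as a computation to be verified.
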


This Theorem is a corollary of the following result of Kelliher
\cite{ke}. Note that in that work, the space $\mathcal{S}(\R^2)$ is
the space of all divergence-free vector fields $u$ with vorticity
$\theta(u)$ so that
\[
\|u\|_{L^\infty} +\|\theta(u)\|_{L^\infty}<\infty.\]
The goal is to consider bounded velocity and vorticity without assumptions on their smoothness, so $\nabla\cdot u= 0$ and $\theta(u)=\nabla\times u$ in
 the sense of distributions.
Moreover, we say $u\in \mathcal{S}$ with vorticity $\theta$ is a bounded solution if
\begin{enumerate}
\item $\partial_t\theta +u\cdot\nabla\theta= 0$ in the sense of
distributions,
\item the vorticity is transported by the flow.
\end{enumerate}
Now we can state the Theorem from \cite{ke}:
\begin{theorem}[Theorem 2.9, \cite{ke}]\label{thmke}
 Assume that $u^0$ is in $\mathcal{S}(\R^2)$, let $T>0$ be arbitrary, and fix $U_\infty(t)\in(C[0,T])^2$ with $U_\infty(0)=0$. Let
 $\mathcal{K}(y) = \dfrac{y^\perp}{|y|^2}$. There exists a bounded solution $u$ to the Euler equations in $\R^2$, and this solution satisfies
a renormalized Biot-Savart law
\[u(t) -u^0 = U_\infty(t)+\lim_{R\rightarrow\infty} (a_R \mathcal{K})*(\theta(t)-\theta^0)\] on $[0,T]\times\R^2$ for all smooth, compactly supported,
 radial cutoff functions $a_R(x) = a(x/R)$  with $a(x) = 1$ for $|x|<1$ and $a(x)=0$ for $|x|>2$.
This solution is unique among all solutions $u$ with $u(0)=u^0$ that satisfy the given renormalized Biot-Savart law.
\end{theorem}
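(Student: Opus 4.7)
The plan is to prove existence by mollification plus compactness, and uniqueness by a Yudovich-type log-Lipschitz argument. The distinguishing feature of this statement compared with the classical Yudovich theorem is that the vorticity $\theta \in L^\infty(\R^2)$ need not be compactly supported, so the usual Biot--Savart kernel $\mathcal{K}(y) = y^\perp/|y|^2$ is not integrable against $\theta$; this forces the renormalization $\lim_{R\to\infty}(a_R\mathcal{K})*(\theta(t)-\theta^0)$ and the freedom encoded in $U_\infty(t)$, which records the genuine non-uniqueness coming from the fact that bounded harmonic divergence-free fields on $\R^2$ are (time-dependent) constants.

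For existence I would approximate: let $\theta_n^0 = \varphi_n*(\chi_{B_n}\theta^0)$ be smooth, compactly supported truncations, and let $u_n^0$ be obtained from $u^0$ by analogous cutoff and mollification so that $\operatorname{curl} u_n^0 = \theta_n^0$ and $u_n^0 \to u^0$ locally uniformly. Classical Yudovich theory produces unique global bounded solutions $(u_n,\theta_n)$ with $\|\theta_n(t)\|_{L^\infty}=\|\theta_n^0\|_{L^\infty}$ and with velocities that are log-Lipschitz on any compact set, uniformly in $n$. Arzel\`a--Ascoli on compact subsets of $[0,T]\times\R^2$ extracts a limit $(u,\theta)$ solving $\partial_t\theta+u\cdot\nabla\theta=0$ in distributions and transporting vorticity along the limiting flow. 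On the approximate level, once $R$ is large relative to the support of $\theta_n(t)-\theta_n^0$, the identity $u_n(t)-u_n^0 = (a_R\mathcal{K})*(\theta_n(t)-\theta_n^0)$ is just the standard Biot--Savart law; passing to the limit $n\to\infty$ and then $R\to\infty$, plus adding back the prescribed $U_\infty(t)$ to absorb the remaining spatial-constant ambiguity, yields the renormalized Biot--Savart law.

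For uniqueness, suppose $u_1,u_2$ are two bounded solutions with identical initial data and the same $U_\infty$. Subtracting their renormalized Biot--Savart laws gives $u_1(t)-u_2(t) = \lim_{R\to\infty}(a_R\mathcal{K})*(\theta_1(t)-\theta_2(t))$. Since each $u_i$ is log-Lipschitz on bounded sets, with a modulus coming from local Calder\'on--Zygmund bounds on $a_R\mathcal{K}*\theta_i$ that are uniform in $R$, the associated flow maps $X_i(t,x)$ satisfy
\[
\frac{d}{dt}|X_1(t,x)-X_2(t,x)| \lesssim \Phi(|X_1(t,x)-X_2(t,x)|) + \|u_1(t)-u_2(t)\|_{L^\infty(K)},
\]
where $\Phi(s)=s(1+(\log s)_-)$ and $K$ is any compact set containing the relevant trajectories on $[0,T]$. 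Writing $\theta_i(t)=\theta^0\circ X_i^{-1}(t,\cdot)$ lets one bound $\|u_1(t)-u_2(t)\|_{L^\infty(K)}$ by the log-Lipschitz modulus of $\theta^0$ applied to $\sup_{x\in K'}|X_1-X_2|$ on a slightly larger compact $K'$, closing a Gr\"onwall inequality in Osgood/Yudovich form and forcing $X_1\equiv X_2$, hence $\theta_1\equiv\theta_2$ and $u_1\equiv u_2$. The main obstacle I expect is precisely justifying the uniform-in-$R$ log-Lipschitz estimate for $(a_R\mathcal{K})*\theta_i$ and establishing that the limit defining $u_1-u_2$ converges locally uniformly despite $\theta_1-\theta_2$ being merely bounded; the saving grace is the cancellation $\int(\theta_1-\theta_2)\,dx = 0$ on every compact set that contains the same mass of $\theta^0$ under both flows, which makes $(a_R\mathcal{K})*(\theta_1-\theta_2)$ decay sufficiently at infinity for the limit to exist.
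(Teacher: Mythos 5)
First, a point of comparison: the paper does not prove this statement at all. It is quoted verbatim as Theorem 2.9 of Kelliher \cite{ke}, and the text immediately following says the proof is given in its entirety there; the authors only \emph{use} it to derive their Theorem \ref{eandu}. Measured against Kelliher's actual argument, your proposal is missing the one idea that makes the theorem work, namely the Serfati identity: for a single fixed cutoff $a$ one has
\[
u(t)=u^0+(a\mathcal{K})*\bigl(\theta(t)-\theta^0\bigr)-\int_0^t \bigl(\nabla\nabla^\perp\bigl[(1-a)\mathcal{K}\bigr]\bigr)*\bigl(u\otimes u\bigr)(s)\,ds,
\]
where the last kernel decays like $|y|^{-3}$ and so converges absolutely against merely bounded data. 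Without it, your truncation scheme breaks in two places. (i) You never obtain an $n$-independent bound on $\|u_n(t)\|_{L^\infty}$: the classical estimate $\|u_n\|_{L^\infty}\lesssim\|\theta_n\|_{L^1}^{1/2}\|\theta_n\|_{L^\infty}^{1/2}$ is useless because $\|\theta_n^0\|_{L^1}\to\infty$ as the truncation is removed, so the Arzel\`a--Ascoli step has no uniform input; the uniform velocity bound is exactly what Serfati's a priori estimate supplies. (ii) The interchange of the limits $n\to\infty$ and $R\to\infty$ in the renormalized law is the crux of the theorem, not a routine passage: for the limit solution, $\theta(t)-\theta^0$ is bounded with no decay, $\mathcal{K}\notin L^1$ near infinity, and you offer no uniform-in-$n$ tail estimate, so asserting that $\lim_{R\to\infty}(a_R\mathcal{K})*(\theta(t)-\theta^0)$ exists is assuming what must be proved. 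Separately, ``adding back the prescribed $U_\infty(t)$'' to a constructed solution does not yield a solution of the Euler equations; the drift enters through a time-dependent boost, i.e.\ the coordinate shift $x\mapsto x-\int_0^t U_\infty(s)\,ds$ (with pressure modified by a term linear in $x$), which is precisely the mechanism the paper itself invokes when it sets $U_\infty^{(1)}\equiv 0$ in the proof of Theorem \ref{eandu}.

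The uniqueness half has the more serious flaw. Your Osgood/Gr\"onwall scheme is localized to compact sets $K\subset K'$, but with non-decaying vorticity $\theta_i(t)=\theta^0\circ X_i^{-1}(t,\cdot)$ the difference $u_1-u_2$ at a point is influenced by flow discrepancies arbitrarily far away through the slowly decaying kernel, so no compactly localized Gr\"onwall inequality closes. The ``saving grace'' you invoke --- that $\int(\theta_1-\theta_2)\,dx=0$ on suitable compact sets forces enough decay of $(a_R\mathcal{K})*(\theta_1-\theta_2)$ --- is not a rigorous statement: the two flows transport different amounts of mass into any fixed compact set, and even where such cancellation holds it yields no quantitative decay rate for the convolution. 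Kelliher's uniqueness proof instead subtracts the Serfati identities of the two solutions, so that $u_1-u_2$ is expressed through the compactly supported integrable piece $(a\mathcal{K})*(\theta_1-\theta_2)$ plus the absolutely convergent twice-differentiated kernel acting on $u_1\otimes u_1-u_2\otimes u_2$, after which an Osgood argument on $\sup_{x\in\R^2}|X_1-X_2|$ (global, not local, which the integrable kernel now permits) closes. If you want to salvage your outline, replace every appeal to the renormalized Biot--Savart law inside the estimates by the Serfati identity, prove uniform bounds and uniqueness from it, and derive the renormalized law for the constructed solution afterwards as a consequence.
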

The proof of this Theorem is given in its entirety in \cite{ke}. The
vector field $U_\infty(t)$ allows the work to characterize the
non-uniqueness of solutions when $u$ is only bounded. We will use
this result to prove Theorem \ref{eandu}, and the choice of
$U_\infty(t)$ is naturally proscribed by the boundary conditions on
the stream function in \eqref{bscyl}.
\begin{proof}[Proof of Theorem \ref{eandu}]
 Let $\theta^0(x,y)$ on $\R^2$ be the periodic extension of $\theta_0(z)$ and define $u_0(z) = \nabla^\perp\Gamma*\theta_0(z)$ with periodic extension
 $u^0(x,y)$. Then by Theorem \ref{thmke}, there exist unique $u$ and $\theta$ defined on all of $\R^2$ so that
\[u(t) -u^0 = U_\infty(t)+\lim_{R\rightarrow\infty} (a_R \mathcal{K})*(\theta(t)-\theta^0)\] on $[0,T]\times\R^2$.
The solution $u(x,y,t)$ and $\theta(x,y,t)$ are periodic by uniqueness.

First we will show that renormalized Biot-Savart law is equivalent to the cylindrical Biot Savart law given by \eqref{strcyl} and \eqref{bscyl}.
 If we consider any $g(\xi)\in L^\infty(\mathbb{R}^2)$ such that $g(\xi)=g(\xi+(0,2\pi))$ and $g(\xi)=0$ for $|\xi_1|>R$, then
\[\int_{\R^2} a_R(z-\xi) \frac{(z-\xi)^\perp}{|z-\xi|^2}g(\xi)d\xi= \int_{\R}\sum_{k=-\infty}^\infty\int_{-\pi}^\pi a_R(z-\xi_k)
\frac{(z-\xi_k)^\perp}{|z-\xi_k|^2}g(\xi)d\xi\,,\] where $\xi_k =
\xi+(0,2\pi k)$ by the periodicity of $g$.  Observe that
\[\sum_{k=-\infty}^{\infty}\frac{(z-\xi_k)^\perp}{|z-\xi_k|^2} = \sum_{k=-\infty}^\infty \frac{(-(x_2-y_2-2\pi k), x_1-y_1)}{(x_1-y_1)^2 +(x_2 - y_2-2\pi k)^2}\] and by
 Poisson's Summation Formula, we have the following identity:
\begin{lemma}\label{poiss}
For $a,b\neq 0$
\[ \sum_{k=-\infty}^\infty \frac{(-(b-2\pi k), a)}{a^2 +(b-2\pi k)^2} = \frac{(-\sin(b), \sinh(a))}{2(\cosh(a)-\cos(b))}.\]
\end{lemma}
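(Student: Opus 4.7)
The plan is to reduce both components to the classical Mittag--Leffler expansion
\[
\pi \cot(\pi \zeta) = \sum_{k \in \mathbb{Z}} \frac{1}{\zeta - k}
\]
(read as a symmetric principal value when necessary), then finish via standard trigonometric/hyperbolic identities. This is much cleaner than contour integration or Poisson summation, though either of those would also work.

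First I would split each scalar kernel into simple poles. Using
\[
\frac{1}{A^2 + B^2} = \frac{1}{2iB}\left(\frac{1}{A - iB} - \frac{1}{A + iB}\right), \qquad \frac{A}{A^2 + B^2} = \frac{1}{2}\left(\frac{1}{A - iB} + \frac{1}{A + iB}\right),
\]
with $A = b - 2\pi k$ and $B = a$, each sum over $k$ collapses to sums of the form $\sum_k 1/((b \mp ia) - 2\pi k)$. Scaling the index by $2\pi$ and applying the cotangent expansion gives
\[
\sum_{k \in \mathbb{Z}} \frac{1}{(b \mp ia) - 2\pi k} = \frac{1}{2}\cot\bigl((b \mp ia)/2\bigr).
\]

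For the second component I would use $\cot X - \cot Y = \sin(Y-X)/(\sin X \sin Y)$ with $X = (b-ia)/2$ and $Y = (b+ia)/2$. The numerator produces $\sin(ia) = i\sinh a$, while the denominator simplifies via
\[
\sin\bigl((b-ia)/2\bigr)\sin\bigl((b+ia)/2\bigr) = \sin^{2}(b/2) + \sinh^{2}(a/2) = \tfrac{1}{2}(\cosh a - \cos b),
\]
giving exactly $\sinh a \big/ \bigl(2(\cosh a - \cos b)\bigr)$. For the first component I would use the companion identity $\cot X + \cot Y = \sin(X+Y)/(\sin X \sin Y)$; the same denominator reappears and the numerator becomes $\sin b$, producing $-\sin b \big/ \bigl(2(\cosh a - \cos b)\bigr)$ after tracking the overall sign introduced by the factor $b - 2\pi k$ versus the $-(b-2\pi k)$ in the vector.

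The only subtlety is that the first-component series $\sum_k (b-2\pi k)/\bigl(a^{2}+(b-2\pi k)^{2}\bigr)$ is only conditionally convergent, so the identity has to be interpreted as the symmetric limit of $\sum_{|k|\le N}$. This is exactly the regularization under which the Mittag--Leffler expansion of $\pi\cot(\pi\zeta)$ is valid, and it is also the symmetric limit produced by the radial cutoff $a_{R}$ in Theorem~\ref{thmke}, so the two conventions match and no additional argument is needed. Thus I expect no real obstacle, just bookkeeping of signs and factors of $2\pi$.
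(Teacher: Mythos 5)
Your proof is correct, but it takes a genuinely different route from the paper's. The paper establishes the identity by Poisson summation: it evaluates the Fourier transforms $\int_{-\infty}^{\infty} e^{2\pi i x n}\,\frac{2\pi x}{a^2+(2\pi x)^2}\,dx$ and $\int_{-\infty}^{\infty} \frac{e^{2\pi i x n}}{a^2+(2\pi x)^2}\,dx$ by residue calculus, which turns each lattice sum into a geometric series in $e^{-|a|n}e^{\pm ibn}$ that is then summed in closed form. You instead split each term into simple poles and invoke the Mittag--Leffler expansion $\pi\cot(\pi\zeta)=\sum_{k}(\zeta-k)^{-1}$ in the symmetric sense, reducing both components to $\cot\bigl((b\mp ia)/2\bigr)$ and finishing with the addition formulas; the constants and signs all check out, including the key denominator identity $\sin\bigl((b-ia)/2\bigr)\sin\bigl((b+ia)/2\bigr)=\tfrac{1}{2}(\cosh a-\cos b)$. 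Your route avoids contour integration entirely (granting the cotangent expansion as known) and is shorter; the paper's route has the side benefit of exhibiting the kernel as the exponentially convergent series $\tfrac12+\sum_{n\ge 1}e^{-|a|n}e^{\pm ibn}$, which makes the exponential decay in $|a|$ used elsewhere in the paper (e.g.\ in the estimate \eqref{es33}) transparent. Your remark about the first component being only conditionally convergent is the right one to make: both the principal-value cotangent expansion and the limit $R\to\infty$ of the cutoff $a_R\mathcal{K}$ in Theorem \ref{thmke} correspond to symmetric partial sums, so the regularization you use is consistent with the way the lemma is applied.
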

Since $a_R$ tends to $1$ uniformly and $\theta$ is compactly
supported in ${S}$, we can conclude that limit in $R$ converges to
the desired cylindrical Biot-Savart law.

The boundary conditions on $\Psi$ in \eqref{bscyl} require that
\[ \lim_{x\rightarrow +\infty}u_2(x,y)= -\lim_{x\rightarrow-\infty}u_2(x,y). \]
Since $k*\theta(x,y,t)$ satisfies this equality, it must be true
that $U_\infty^{(2)}(t) = -U_\infty^{(2)}(t)=0$. As for
$U^{(1)}_\infty(t)$, observe that if $\theta_1(x,y,t)$ solves the
cylindrical problem with horizontal drift
\[\partial_t\theta_1 + U_\infty^{(1)}(t)\partial_1\theta_1 +(k*\theta_1)\nabla \theta_1 = 0, \]
the translated function $\widetilde{\theta} = \theta_1(x+F(t),y,t)$ where $F(t) = \int_0^t U_\infty^{(1)}(s)ds$ satisfies
\[\partial_t\widetilde{\theta} + (k*\widetilde{\theta})\nabla\widetilde{\theta} = 0.\] By setting $U_\infty^{(1)}(t)\equiv 0$, we factor out
 the possibility of a moving reference frame in the horizontal direction.

It only remains to show that the Lemma \ref{poiss}  holds.
\begin{proof}[Proof of Lemma \ref{poiss}]
We can compute this sum explicitly. For the first component, Poisson
summation formula gives
\[
 \sum_{k\in \mathbb{Z}} \frac{-(b-2\pi k)}{a^2 +(b-2\pi k)^2} = -\frac{1}{2i}\sum_{n=1}^\infty
 (e^{ibn}e^{-|a|n}-e^{-ibn}e^{-|a|n})\,,
\]
since we have by residue calculus
\[
    \int_{-\infty}^\infty\frac{e^{2\pi i xn}2\pi x}{a^2+(2\pi x)^2}dx =\left\{\begin{array}{ll}
 \dfrac{ie^{-|a|n}}{2}, &\mbox{ for }n>0, \\
&\\
-\dfrac{ie^{|a|n}}{2}, &\mbox{ for }n<0, \\
&\\
0,&  n=0.
\end{array}\right.
   \]
Since $|e^{-|a|\pm i b}|\le e^{-|a|}<1$ for $|a|>0$, we have
\[
 \sum_{k\in \mathbb{Z}} \frac{-(b-2\pi k)}{a^2 +(b-2\pi k)^2} = -\frac{\sin(b)}{2(\cosh(a) - \cos(b))}.
\]
For the second component, assume  that $a>0$. Then,
\[
\sum_{k\in \mathbb{Z}} \frac{a}{a^2 +(b-2\pi k)^2} =
\frac{1}{2}+\frac{1}{2}\sum_{n=1}^\infty
e^{-(a+ib)n}+e^{-(a-ib)n}\,,
\]
since we have
\[
    \int_{-\infty}^\infty\frac{e^{2\pi i xn}}{a^2+(2\pi x)^2}dx =
    \frac{e^{-|an|}}{2|a|}.
   \]
As before, we can use the geometric series and see that
\[
\sum_{k\in \mathbb{Z}} \frac{a}{a^2 +(b-2\pi k)^2} =
\frac{1}{2}+\frac{1}{2}\left(\frac{e^{-(a+ib)}}{1-e^{-(a+ib)}}+\frac{e^{-(a-ib)}}{1-e^{-(a-ib)}}\right)
= \frac{\sinh(a)}{2(\cosh(a)-\cos(b))}.
\]
Since this expression is odd in $a$, it holds for $a<0$ as well.
\end{proof}
Finally, the fact that $\theta$ is periodic in the whole plane and
is transported by the flow gives\[ \|\theta(t)\|_{L^1(S)} =
 \|\theta_0\|_{L^1(S)} \mbox{ and } \|\theta(t)\|_{L^\infty(S)} = \|\theta_0\|_{L^\infty(S)}.                                                                                                                                                       \]
By the cylindrical Biot-Savart law, we know that $\|u\|_{L^\infty(S)}\lesssim \|\theta(0)\|_{L^1(S)}+\|\theta(0)\|_{L^\infty(S)}$
and $\theta$ will remain compactly supported.
\end{proof}

\section{Existence of Minimizer}

In this Appendix, we prove a standard result about existence of a
minimizer in the variational problem \eqref{vari}.

\begin{lemma}\label{triv1}
The problem \eqref{vari} has a minimizer.
\end{lemma}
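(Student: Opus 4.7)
The proof will be a standard direct-method argument: take a minimizing sequence, extract a weak-$*$ limit, verify admissibility, and prove continuity of $\Phi$ along this sequence. The only real obstacle is the unbounded kernel $|x_1-x_2|$, which is handled by exploiting the moment constraint $\sum j\rho_j^{\pm}<\infty$ built into $\mathcal{O}$.

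First, let $\{\rho_n\}\subset\mathcal{O}$ be a minimizing sequence. Since $0\le\rho_n\le 1$, Banach--Alaoglu yields a subsequence (not relabeled) converging weak-$*$ in $L^\infty(\mathbb{R})$ to some $\rho^*$. The pointwise bounds $0\le\rho^*\le 1$ and each linear constraint $\int_{j\delta}^{(j+1)\delta}\rho\,dx=\rho_j^+$ (and the analogous negative-half constraint) survive the limit, since they amount to testing against $\chi_{[j\delta,(j+1)\delta]}\in L^1(\mathbb{R})$. Hence $\rho^*\in\mathcal{O}$.

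Next, derive uniform tail estimates. For every $\rho\in\mathcal{O}$ and every $N$,
\[
\int_{|x|>(N+1)\delta}\rho\,dx=\sum_{j>N}(\rho_j^++\rho_j^-),\qquad \int_{|x|>(N+1)\delta}|x|\rho\,dx\le\delta\sum_{j>N}(j+1)(\rho_j^++\rho_j^-),
\]
and both quantities vanish as $N\to\infty$ at a rate depending only on the sequences $\{\rho_j^{\pm}\}$ and not on $\rho$. Combined with $|x_1-x_2|\le|x_1|+|x_2|$, this shows that if $\Phi_R(\rho)$ denotes the same functional with integration restricted to $[-R,R]^2$, then $\Phi(\rho)-\Phi_R(\rho)\to 0$ as $R\to\infty$, uniformly over $\rho\in\mathcal{O}$.

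For the localized part, observe that the truncated kernel $K_R(x,y)=|x-y|\chi_{[-R,R]^2}(x,y)$ lies in $L^2(\mathbb{R}^2)$, so the associated integral operator $T_R$ on $L^2([-R,R])$ is Hilbert--Schmidt and hence compact. On the bounded interval $[-R,R]$, weak-$*$ convergence in $L^\infty$ implies weak convergence in $L^2$, so $\rho_n\rightharpoonup\rho^*$ weakly in $L^2([-R,R])$. Compactness of $T_R$ yields $T_R\rho_n\to T_R\rho^*$ strongly in $L^2$, whence
\[
\Phi_R(\rho_n)=\langle\rho_n,T_R\rho_n\rangle\longrightarrow\langle\rho^*,T_R\rho^*\rangle=\Phi_R(\rho^*).
\]
Given $\varepsilon>0$, choose $R$ so that $|\Phi(\rho)-\Phi_R(\rho)|<\varepsilon$ uniformly in $\rho\in\mathcal{O}$ (by the previous paragraph), and then pass to the limit in $n$. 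We obtain $\Phi(\rho^*)=\lim_n\Phi(\rho_n)=\inf_{\mathcal{O}}\Phi$, so $\rho^*$ is the desired minimizer. The main obstacle was the unbounded kernel, and it is resolved precisely because the moment condition defining $\mathcal{O}$ delivers the uniform tail decay needed to reduce to a compact operator on a bounded interval.
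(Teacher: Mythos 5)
Your proof is correct, but it takes a genuinely different route from the paper's. Both arguments begin identically: a minimizing sequence, a weak-$*$ (equivalently, weak-over-compacts) limit $\rho^*$, and the observation that the linear constraints and the pointwise bounds pass to the limit so that $\rho^*\in\mathcal{O}$. The divergence is in how the unbounded kernel $|x_1-x_2|$ is handled. The paper splits $\Phi(\rho_n)$ by hand into several explicit pieces (integrating out the absolute value over $\{x_2<x_1\}$ and $\{x_2>x_1\}$), shows that one specific bilinear term, namely $\int_0^\infty x_1\rho_n(x_1)\int_{x_1}^\infty\rho_n(x_2)\,dx_2\,dx_1$, converges exactly to its limit by tightness and dominated convergence, and applies Fatou-type lower semicontinuity to the remaining nonnegative moment terms; this yields only $\Phi(\rho^*)\le\liminf\Phi(\rho_n)$, which suffices. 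You instead exploit the rigidity of $\mathcal{O}$ more directly: since $\int_{j\delta}^{(j+1)\delta}\rho\,dx$ is \emph{prescribed} for every $\rho\in\mathcal{O}$, the first-moment tails are controlled uniformly over the whole admissible class, so the truncation error $\Phi-\Phi_R$ is uniformly small, and the truncated quadratic form is weakly continuous because its kernel is Hilbert--Schmidt on $[-R,R]$. This buys you the stronger conclusion that $\Phi$ is actually continuous on $\mathcal{O}$ for the weak-$*$ topology (not merely lower semicontinuous along the sequence), and it avoids the paper's somewhat delicate bookkeeping of which terms converge and which only satisfy a $\liminf$ inequality. The paper's route is more elementary in that it uses no operator theory, only Fatou and dominated convergence. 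Both are complete proofs.
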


\begin{proof}
Denote
\[
\sigma=\inf_{\rho\in\mathcal{O}} \Phi(\rho)
\]
and $\rho_n$ is a minimizing sequence: $\Phi(\rho_n)\to \sigma$.
Recall that
\begin{equation}\label{tight}
\sup_n\int (1+|x|)\rho_ndx<C.
\end{equation}
Consider $\{\rho_n\}$. We can choose a subsequence
$\{\rho_{k_n}\}\to \rho^*$ weakly over all compact sets in $\mathbb{R}$
and clearly $\rho^*\in \mathcal{O}$. Let us rename this
$\{\rho_{k_n}\}$ back as $\{\rho_n\}$ for convenience. We have
\[
\int_{-T}^T (1+|x|)\rho^*dx=\lim_{n\to\infty} \int_{-T}^T
(1+|x|)\rho_ndx\le \liminf_{n\to\infty} \int (1+|x|)\rho_ndx<C\,,
\]
so
\begin{equation}\label{tight1}
(1+|x|)\rho^* \in L^1(\mathbb{R}).
\end{equation}
because $T$ is arbitrary. Similarly, we conclude that
\begin{equation}\label{in}
\int_{0}^\infty x\rho^*dx\le \liminf_{n\to\infty} \int_0^\infty
x\rho_ndx, \quad \int_{-\infty}^0 |x|\rho^*dx\le
\liminf_{n\to\infty} \int_{-\infty}^0 |x|\rho_ndx.
\end{equation}
Notice also that
\[
\int_{\mathbb{R}^+}\rho^*dx=\int_{\mathbb{R}^-}\rho^*dx=1
\]
as follows from the definition $\mathcal{O}$. We will need the
following result
\begin{equation}\label{inw1}
\lim_{n\to\infty}\int_0^\infty x_1\rho_n(x_1)\int_{x_1}^\infty
\rho_n(x_2)dx_2dx_1=\int_0^\infty x_1\rho^*(x_1)\int_{x_1}^\infty
\rho^*(x_2)dx_2dx_1.
\end{equation}
It is due to the tightness estimate \eqref{tight}, \eqref{tight1},
weak convergence, and Dominated Convergence Theorem.

We now prove that $\rho^*$ is a minimizer, i.e., that
$\Phi(\rho^*)=\sigma$. Write $\Phi(\rho_n)$ as
\[
\Phi(\rho_n)=I_1+I_2=\int_0^\infty \rho_n(x_1)
dx_1\int_{-\infty}^\infty |x_2-x_1|\rho_n(x_2)dx_2+\int_{-\infty}^0
\rho_n(x_1) dx_1\int_{-\infty}^\infty |x_2-x_1|\rho_n(x_2)dx_2.
\]
For $I_1$, we have
\[
I_1=\int_0^\infty \rho_n(x_1) dx_1\int_0^\infty
|x_2-x_1|\rho_n(x_2)dx_2+\int_0^\infty \rho_n(x_1) dx_1\int_0^\infty
(x_2+x_1)\rho_n(-x_2)dx_2.
\]
Consider the first integral. By symmetry, it is equal to
\[
2\int_0^\infty \rho_n(x_1)dx_1\int_0^{x_1}
(x_1-x_2)\rho_n(x_2)dx_2=2\left(\int_0^\infty
x_1\rho_n(x_1)dx_1\right)\left(\int_0^\infty \rho_n(x_2)dx_2\right)-
\]
\[
2\int_0^\infty \rho_n(x_1)dx_1\int_0^{x_1} x_2\rho_n(x_2)dx_2-
2\int_0^\infty x_1\rho_n(x_1)dx_1\int_{x_1}^\infty \rho_n(x_2)dx_2.
\]
Notice that the last two terms are equal to each other and
\[
\int_0^\infty \rho^*dx=\int_0^\infty \rho_ndx=1.
\]
Thus, we are left with
\[
2\left(\int_0^\infty x_1\rho_n(x_1)dx_1\right)\left(\int_0^\infty
\rho^*(x_2)dx_2\right)-4\int_0^\infty
x_1\rho_n(x_1)dx_1\int_{x_1}^\infty \rho_n(x_2)dx_2=
\]
\[
2\left(\int_0^\infty x_1\rho_n(x_1)dx_1\right)\left(\int_0^\infty
\rho^*(x_2)dx_2\right)-4\int_0^\infty
x_1\rho^*(x_1)dx_1\int_{x_1}^\infty \rho^*(x_2)dx_2+o(1)
\]
by \eqref{inw1}. Then, \eqref{in} applied to the first term in the
last expression gives

\[
\int_0^\infty \rho^*(x_1)dx_1\int_0^{x_1}
(x_1-x_2)\rho^*(x_2)dx_2\le \liminf_{n\to\infty} \int_0^\infty
\rho_n(x_1)dx_1\int_0^{x_1} (x_1-x_2)\rho_n(x_2)dx_2.
\]
In a similar way, one shows that
\[
\int_0^\infty \int_0^\infty
\rho^*(x_1)\rho^*(-x_2)(x_1+x_2)dx_1dx_2\le \liminf_{n\to\infty}
\int_0^\infty \int_0^\infty
\rho_n(x_1)\rho_n(-x_2)(x_1+x_2)dx_1dx_2.
\]
The integral $I_2$ can be handled similarly. Adding up these
inequalities, we see that
$
\Phi(\rho^*)\le \sigma
$
so $\rho^*$ is a minimizer.

\end{proof}

\section{Three auxiliary Lemmas}

In this Appendix, we will prove three results used in the main text.
We introduce notation  $\Box=[-1,1]\times \mathbb{T}$.

\begin{lemma}\label{petal} We have
\[
\min_{Q\subset S, |Q|=s}\int_\Box |x|\chi_Qdxdy=\frac{s^2}{8\pi}
\]
and the minimum is achieved on $Q_{min}=[-s/4\pi,s/4\pi]\times
\mathbb{T}$.
\end{lemma}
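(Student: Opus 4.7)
The statement is a bathtub-type rearrangement principle: to minimize the weighted integral of $|x|$ against a characteristic function of prescribed mass $s$, one should place the set where the weight is smallest, i.e.\ near the $y$-axis. Since $\int_\Box |x|\chi_Q\,dxdy$ depends only on $Q\cap\Box$, the problem reduces to minimizing $\int_Q |x|\,dxdy$ over $Q\subset\Box$ with $|Q|=s$, and the formula implicitly requires $s\le |\Box|=4\pi$. My plan is to take the candidate minimizer $Q_{min}=[-a,a]\times\mathbb{T}$, where $a=s/(4\pi)\le 1$, so that $Q_{min}\subset\Box$ and $|Q_{min}|=4\pi a=s$, and then verify optimality by a direct level-set comparison.

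Given any admissible $Q$, I would write
\[
\int_Q |x|\,dxdy-\int_{Q_{min}} |x|\,dxdy=\int_{Q\setminus Q_{min}}|x|\,dxdy-\int_{Q_{min}\setminus Q}|x|\,dxdy
\]
and combine the pointwise bounds $|x|\ge a$ on $Q\setminus Q_{min}$ and $|x|\le a$ on $Q_{min}\setminus Q$ with the identity $|Q\setminus Q_{min}|=|Q_{min}\setminus Q|$, which follows from $|Q|=|Q_{min}|=s$. The right-hand side is then at least $a\bigl(|Q\setminus Q_{min}|-|Q_{min}\setminus Q|\bigr)=0$, proving optimality. The value of the minimum is obtained by direct computation:
\[
\int_{Q_{min}}|x|\,dxdy=2\pi\cdot 2\int_0^a x\,dx=2\pi a^2=\frac{s^2}{8\pi}.
\]

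There is no substantial obstacle; this is a one-step rearrangement argument. The only point requiring a moment of care is the implicit constraint $s\le 4\pi$, without which $Q_{min}$ would fail to fit inside $\Box$ and the minimum would instead saturate at $\int_\Box|x|\,dxdy=2\pi$.
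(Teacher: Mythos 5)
Your level-set (bathtub) comparison is correct and is precisely the argument the paper leaves implicit --- its own ``proof'' is the single sentence that the result follows immediately from the structure of the weight $|x|$. One small caveat: your opening reduction from $Q\subset S$ to $Q\subset\Box$ is not actually a reduction (a $Q$ placed outside $\Box$ would make the literal integral vanish, so the statement as written would have infimum $0$); the lemma is really meant with the weight $|x|$ integrated over all of $S$ (equivalently, with $Q\subset\Box$ imposed), and under either reading your pointwise comparison $|x|\ge a$ on $Q\setminus Q_{min}$, $|x|\le a$ on $Q_{min}\setminus Q$ works verbatim --- in the $\int_S$ reading even without the constraint $s\le 4\pi$.
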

\begin{proof}
The result follows immediately from the  structure of the weight
$|x|$ against which $\chi_Q$ is being integrated.
\end{proof}

Recall the kernel $K(z,\xi)$ which was introduced in \eqref{yadro}.

\begin{lemma}\label{sm1} Let $L>2$. If $A_1^+\subset \{|x-L|<1\}$ and
$A_1^-\subset \{|x+L|<1\}$, then
\begin{equation}\label{rh1}
\int_{A_1^+}\int_{A_1^-} |K(z,\xi)|dzd\xi\lesssim \int_{A_1^+\cup
A_1^-} ||x|-L|dxdy.
\end{equation}
\end{lemma}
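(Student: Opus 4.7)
The plan is to exploit the very large horizontal separation between points of $A_1^+$ and $A_1^-$ to extract strong exponential decay from $K$, and then to balance the resulting product $|A_1^+|\cdot|A_1^-|$ against a quadratic lower bound on $\int_{A_1^\pm}||x|-L|$ coming from Lemma \ref{petal}.

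First I would rewrite the kernel in a form that makes its decay transparent. Setting $u=x_1-x_2$, $v=y_1-y_2$ and factoring,
\[
\cosh u-\cos v=\tfrac{1}{2}e^{|u|}\bigl(1+e^{-2|u|}-2e^{-|u|}\cos v\bigr),
\]
so from the definition \eqref{yadro},
\[
K(z,\xi)=\log\bigl(1+e^{-2|u|}-2e^{-|u|}\cos v\bigr).
\]
For $z\in A_1^+$ and $\xi\in A_1^-$ one has $u=x_1-x_2\ge 2L-2$, which for $L>L_0>2$ is bounded below by an absolute constant; the elementary bound $|\log(1+t)|\lesssim|t|$ for small $|t|$ then yields
\[
|K(z,\xi)|\lesssim e^{-|u|}\lesssim e^{-2L}\quad\text{on } A_1^+\times A_1^-.
\]
Integrating gives $\int_{A_1^+}\int_{A_1^-}|K|\,dz\,d\xi\lesssim e^{-2L}|A_1^+|\cdot|A_1^-|$.

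Next I would bound the right-hand side from below. On $A_1^+\subset\{|x-L|<1\}$ one has $x>0$ since $L>2$, so $||x|-L|=|x-L|$; analogously on $A_1^-$. Translating Lemma \ref{petal} by $\pm L$ (noting that $|A_1^\pm|\le 4\pi$ automatically) gives the petal bounds
\[
\int_{A_1^\pm}||x|-L|\,dx\,dy\ge \frac{|A_1^\pm|^2}{8\pi},
\]
and adding them together with the inequality $|A_1^+|^2+|A_1^-|^2\ge 2|A_1^+|\cdot|A_1^-|$ produces
\[
\int_{A_1^+\cup A_1^-}||x|-L|\,dx\,dy\ge \frac{|A_1^+|\cdot|A_1^-|}{4\pi}.
\]

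Combining the two bounds yields
\[
\int_{A_1^+}\int_{A_1^-}|K(z,\xi)|\,dz\,d\xi\lesssim e^{-2L}\int_{A_1^+\cup A_1^-}||x|-L|\,dx\,dy,
\]
which is stronger than \eqref{rh1} for every $L>L_0>2$, since $e^{-2L_0}$ is then an absolute constant. The only mildly delicate step is deriving the pointwise bound $|K|\lesssim e^{-|u|}$ when $|u|$ is bounded below; once it is in hand, the exponential factor $e^{-2L}$ provides enormous slack, and the remaining content is the simple quadratic balancing between $|A_1^+|\cdot|A_1^-|$ and $\int||x|-L|$ furnished by Lemma \ref{petal} plus AM--GM.
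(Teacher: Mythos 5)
Your proof is correct and follows essentially the same route as the paper: a pointwise bound on $K$ over $A_1^+\times A_1^-$ reduces the left-hand side to $|A_1^+|\cdot|A_1^-|$, which is then controlled via $|A_1^+|^2+|A_1^-|^2$ and the translated Lemma \ref{petal}. The only difference is cosmetic: the paper just invokes the decay estimate \eqref{es33} (mere boundedness of $K$ on the product set suffices), whereas you re-derive a sharper $e^{-2L}$ bound by factoring $\cosh u-\cos v$, and that extra exponential gain is unused slack.
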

\begin{proof}
The estimate \eqref{es33}  implies that
\[
\int_{A_1^+}\int_{A_1^-} |K(z,\xi)|dzd\xi\lesssim |A_1^+|\cdot
|A_1^-|\le 0.5(|A_1^+|^2+|A_1^-|^2).
\]
Now, to estimate $|A_1^{\pm}|^2$, we only need to change variables
as $x\pm L=\widehat x$ in the right-hand side of \eqref{rh1} and
notice that
\[
|B|^2\lesssim \int_S |\widehat x|\chi_{B}d\widehat z
\]
for every measurable set $B\subset \{|\widehat x|<1\}$ by Lemma
\ref{petal}.
\end{proof}

\begin{lemma}\label{lg} If $A$ is a measurable subset of\, $\Box$, then
\begin{equation}\label{pet}
\int_A\int_A |\log|z-\xi||dzd\xi\lesssim \int_\Box |x|\chi_Adxdy.
\end{equation}
\end{lemma}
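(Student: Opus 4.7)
My strategy is to split the kernel $|\log|z-\xi||$ into a bounded part and a singular part, handle the bounded part directly through Lemma \ref{petal}, and tame the singular part by a Steiner symmetrization in the periodic variable $y$ that reduces the question to a one--dimensional inequality in the vertical profile $\mu(x)=|\{y:(x,y)\in A\}|$.

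First I observe that since $A\subseteq\Box$, $|z-\xi|$ is uniformly bounded on $A\times A$, so
\[
|\log|z-\xi||\le C+\log(1/|z-\xi|)\chi_{|z-\xi|<1}.
\]
The bounded part contributes at most $C|A|^2$, and Lemma \ref{petal} (which gives $|A|^2/(8\pi)\le\int_\Box|x|\chi_A\,dxdy$) shows that this is $\lesssim\int_\Box|x|\chi_A\,dxdy$. It therefore suffices to bound $J_{\mathrm{sing}}:=\int_A\int_A\log(1/|z-\xi|)\chi_{|z-\xi|<1}\,dz\,d\xi$.

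For $J_{\mathrm{sing}}$ I apply Riesz rearrangement in $y$ slice by slice: for each fixed $x$ the slice $A_x$ may be replaced by the symmetric interval $(-\mu(x)/2,\mu(x)/2)\subset\mathbb{T}$, since the kernel is a symmetric decreasing function of $|y_1-y_2|$ on the relevant range. A short computation on the resulting ``tent'' function of $v=y_1-y_2$ gives
\[
J_{\mathrm{sing}}\le\int_{-1}^{1}\int_{-1}^{1}\min(\mu(x_1),\mu(x_2))\,\Phi(|x_1-x_2|,\mu(x_1)+\mu(x_2))\,dx_1\,dx_2,
\]
where $\Phi(u,M)=\int_{-M/2}^{M/2}\log_+(1/\sqrt{u^2+v^2})\,dv$. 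A direct estimate yields $\Phi(u,M)\lesssim M(1+|\log(u+M)|)$ uniformly in the regime $u+M\lesssim 1$, and $\Phi(u,M)\lesssim M$ otherwise; combined with the elementary identity $\min(\mu_1,\mu_2)(\mu_1+\mu_2)\le 2\mu_1\mu_2$, this reduces the problem to controlling two pieces: a ``bounded'' piece whose integrand is $\lesssim\mu_1\mu_2$ and hence integrates to a multiple of $|A|^2\lesssim J(A)$ by Lemma \ref{petal}; and a ``log'' piece
\[
\int\!\!\int\mu(x_1)\mu(x_2)\,\big|\log(|x_1-x_2|+\mu(x_1)+\mu(x_2))\big|\,\chi_{\{\cdot<1\}}\,dx_1\,dx_2.
\]

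Controlling the log piece is the main obstacle. My plan is to integrate in $x_1$ first and use the substitution $s=\mu(x_2)+|x_1-x_2|$ together with the universal estimate $t|\log t|\lesssim 1$ to bound the inner integral by a constant multiple of $\mu(x_2)$; this gives a bound of order $|A|$, which is not sufficient on its own. The crucial refinement is to apply Lemma \ref{petal} slice-by-slice to the level sets $\{x:\mu(x)>t\}$ via a layer-cake decomposition: on the region where the logarithm is large, $\mu$ is small, hence the relevant level set of $A$ has small measure and its projection must concentrate near $x=0$, so the weight $|x|$ in $J(A)$ picks up exactly the factor needed to absorb the logarithm. Assembling these pieces yields $J_{\mathrm{sing}}\lesssim\int_\Box|x|\chi_A\,dxdy$, completing the proof.
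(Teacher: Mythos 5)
Your opening moves are sound and genuinely different from the paper's. The split into a bounded part (absorbed by $C|A|^2\lesssim\int_\Box|x|\chi_A$, which is exactly Lemma \ref{petal}) plus a singular part, followed by Riesz/Steiner rearrangement in $y$ slice by slice, is a legitimate alternative to the paper's route, which instead reduces to $|A|=\epsilon$ small and discretizes $A$ into vertical strips of width $\epsilon$ with normalized masses $I_j=|E_j|/\epsilon$. Your trapezoid estimate and the bound $\Phi(u,M)\lesssim M(1+|\log(u+M)|)$ are correct, and the resulting one-dimensional inequality
\[
\int\!\!\int\mu(x_1)\mu(x_2)\bigl(1+\bigl|\log(|x_1-x_2|+\mu_1+\mu_2)\bigr|\bigr)\,dx_1dx_2\lesssim\int|x|\mu(x)\,dx
\]
is in fact true and loses nothing essential (the trapezoid integral is comparable from below to $\mu_1\mu_2\log_+\frac{1}{u+\mu_1+\mu_2}-C\mu_1\mu_2$), so the reduction is faithful.

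The problem is that this one-dimensional inequality \emph{is} the lemma --- all of the difficulty has been moved into your ``log piece,'' and your plan for it does not hold up. First, the stated mechanism is wrong: Lemma \ref{petal} says that \emph{any} set $Q$ of measure $s$ satisfies $\int|x|\chi_Q\ge s^2/(8\pi)$; it does not say that a set of small measure ``must concentrate near $x=0$,'' and indeed the level sets $\{\mu>t\}$ can sit anywhere. Second, and more importantly, you correctly observe that the direct estimate stalls at $O(|A|)$ (and a slightly better one at $O(|A|^2\log\frac{1}{|A|})$), both of which can exceed $\int|x|\mu\,dx\ge|A|^2/(8\pi)$ by a factor as large as $|A|^{-1}$ or $\log\frac{1}{|A|}$ --- but the ``crucial refinement'' that is supposed to recover this loss is never actually carried out. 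The recovery is the heart of the matter, and in the paper it is done by a specific interpolation: after localizing at scale $\epsilon=|A|$, the logarithmic terms appear as $\sum_k I_k|\log I_k|$, which is tamed via $u|\log u|\le C(\gamma)u^\gamma$ for $\gamma\in(\tfrac12,1)$ and H\"older's inequality against the summable weight $|j|^{-\gamma p'}$, giving $\sum_{j\ne0}I_j|\log I_j|\lesssim(\sum_{j\ne0}|j|I_j)^\gamma\lesssim1+\sum_{j\ne0}|j|I_j\sim\epsilon^{-2}\int_\Box|x|\chi_A$. Some continuous analogue of this step (trading $t|\log t|$ for $t^\gamma$ and paying with the first moment $\int|x|\mu$) is indispensable in your setup as well; as written, the proposal asserts the conclusion of that step without supplying it, so the proof is incomplete.
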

\begin{proof}
Notice that
\[
\int_A\int_A |\log|z-\xi||dzd\xi\le\int_\Box\int_\Box
|\log|z-\xi||dzd\xi<\infty\,.
\]
Therefore, by Lemma \ref{petal} applied with $s=|A|$, we can always
assume that $|A|=\epsilon<\epsilon_0$ where $\epsilon_0$ is
sufficiently small. Consider $E_j=A\cap
\{j\epsilon<x<(j+1)\epsilon\}$ and let $\delta_j=|E_j|$,
$I_j=\delta_j/\epsilon, j=-N,\ldots, N, N=[\epsilon^{-1}]$. We have
$\sum_{j=-N}^{N} I_j=1$.

In \eqref{pet}, \begin{equation} \label{eqn1}
 \int_\Box |x|\chi_Adxdy\sim
\epsilon^2\sum_{j=-N}^N |j|I_j+\epsilon^2.
\end{equation}
Indeed, if $j\neq 0,-1$, then
\[
\int_{E_j}|x|\chi_Adxdy\sim \epsilon |j| \delta_j=\epsilon^2 |j|I_j.
\]
For $j=0$ and $j=-1$, we have
\[
\int_{E_j}|x|\chi_Adxdy\le \epsilon \delta_j\le \epsilon^2
\]
and
\[
\int_\Box |x|\chi_Adxdy\lesssim \epsilon^2\sum_{j=-N}^N
|j|I_j+\epsilon^2
\]
follows. To prove a lower bound, notice that
 $\delta_0+\delta_{-1}>\epsilon/2$ implies
\[
\int_{E_0\cup E_{-1}} |x|\chi_Adxdy\gtrsim \epsilon^2
\]
by applying Lemma \ref{petal} with $s=\delta_0+\delta_{-1}$. If
$\delta_0+\delta_{-1}<\epsilon/2$, then $|A\backslash (E_0\cup
E_{-1})|>\epsilon/2$ and
\[
\int_{|x|>\epsilon} |x|\chi_{A}dxdy\gtrsim \epsilon^2.
\]
Therefore, we have
\begin{equation}\label{abel1}
\int_\Box |x|\chi_Adxdy\gtrsim\epsilon^2
\end{equation}
either way. Moreover, from the definition of $\delta_j$ and $I_j$,
we get
\begin{equation}\label{abel2}
\int_\Box |x|\chi_Adxdy\ge\sum_{j\neq \{-1,0\}}\int_\Box
|x|\chi_{E_j}dx\gtrsim \epsilon^2 \sum_{j\neq \{-1,0\}} |j|I_j\,.
\end{equation}
Taking the sum of \eqref{abel1} and \eqref{abel2}, we get
\begin{equation}\label{quest3}
\int_\Box |x|\chi_Adxdy\gtrsim    \sum_{j\neq \{-1,0\}}
|j|I_j+\epsilon^2\gtrsim
       \epsilon^2\sum_{j=-N}^N |j|I_j+\epsilon^2\,
\end{equation}
since $\epsilon^2\ge \epsilon^2I_{0(-1)}$.

\smallskip

 Define the potential $U(z)=\int_A
|\log|z-\xi||d\xi$. For fixed $\{\delta_l\}, l=-N,\ldots,N$, we will
estimate $\max_{E_j} U(z)$ for each $j=-N,\ldots,N$. Let
\[
\max_{E_j}U(z)=U(z_j^*).
\]
We can bound the right hand side above by decomposing $U(z_j^*)$ as
\[
 U(z_j^*)\le M_1+M_2+M_3.
\]
The term $M_1$ comes from considering the $\epsilon$-ball around the
point of maximum $z^*_j$. It satisfies
\begin{equation}\label{nnn1}
M_1\lesssim \int_{|z|<\epsilon}|\log|z||dz\sim
\epsilon^2|\log\epsilon|.
\end{equation}
The term $M_2$ comes from integrating over $(E_{j-1}\cup E_j \cup
E_{j+1})\backslash B_\epsilon(z_j^*)$. To estimate it, we notice the
following. Consider, e.g, $E_{j-1}$. If
$\delta_{j-1}=|E_{j-1}|<\epsilon^2$, then
\[
\int_{E_{j-1}}|\log|\xi-z_j^*||d\xi\lesssim
\epsilon^2|\log\epsilon|\,,
\]
because the maximizer of the integral in the left-hand side belongs
to a ball $|\xi-z_j^*|\lesssim \epsilon$. On the other hand, if
$|E_{j-1}|>\epsilon^2$, then
\[
\int_{E_{j-1}}|\log|\xi-z_j^*||d\xi\lesssim \epsilon
\int_{0}^{I_{j-1}}|\log y|dy\lesssim \epsilon I_{j-1}(|\log
I_{j-1}|+1)\,,
\]
because the maximizer of the integral belongs to the rectangle of
hight $\sim I_{j-1}$. Arguing similarly for $E_j$ and $E_{j+1}$, we
get
\begin{equation}\label{nnn2}
M_2 \lesssim \epsilon (\epsilon|\log\epsilon|+I_{j}|\log
I_j|+I_{j-1}|\log I_{j-1}|+I_{j+1}|\log
I_{j+1}|+I_{j-1}+I_j+I_{j+1})\lesssim \epsilon\,,
\end{equation}
where the last bound follows from $x|\log x|\lesssim 1$ when $x<1$.
Finally, the term $M_3$ covers integration over the remaining $E_k$.
It will satisfy
\[
M_3\lesssim \sum_{k:|k-j|>1,|k|\le N }\left( \epsilon^2 |\log
(\epsilon |k-j|)| +\epsilon\int_0^{I_k} |\log (y+\epsilon
|k-j|)|dy\right)\,.
\]
Indeed,
\[
|\log|z_j^*-\xi||\lesssim  |\log (|k-j|\epsilon+|y_j^*-\xi_2|)|,
\quad \xi\in E_{k}\,,
\]
where $z_j^*=(x_j^*,y_j^*), \xi=(\xi_1,\xi_2)$. Then, we have an
upper bound for the following variational problem
\begin{equation}\label{ihe}
\sup_{\Upsilon_k\subseteq \{k\epsilon<x<(k+1)\epsilon\},
|\Upsilon_k|=|E_k|}\int_{\Upsilon_k} |\log
(|k-j|\epsilon+|\xi_2|)|d\xi\lesssim \epsilon^2 |\log (\epsilon
|k-j|)| +\epsilon\int_0^{I_k} |\log (y+\epsilon |k-j|)|dy\,,
\end{equation}
where the first term comes from the case when $|E_k|\le \epsilon^2$,
the second one comes from the other case and the observation that
the optimal configuration $\Upsilon_k^*$ is a rectangle
 of the size $\sim I_k$.

  Integration gives
\[
\int \log ydy=y\log y-y+C
\]
and so
\[
\int_0^{I_k} |\log (y+\epsilon |k-j|)|dy\le\int_0^{I_k} |\log y|dy
\lesssim  I_k|\log I_k|+I_k.
\]
The first term in the right-hand side of \eqref{ihe} gives
\[
\epsilon^2 \sum_{k:|k-j|>1, |k|\le N}|\log (\epsilon|k-j|)|\lesssim
\epsilon^2\sum_{1<|l|\le 2N}|\log (\epsilon |l|)|\lesssim
\epsilon\int_{|x|<1}|\log |x||dx\lesssim \epsilon
\]
by making comparison to an integral.
 Finally, summing over $k$, we have
\[
M_3\lesssim \epsilon +\epsilon\sum_{k=-N}^N I_k|\log I_k|.
\]
Taking into account \eqref{nnn1},\eqref{nnn2}, we get
\[
\max_{z\in A} U(z)\lesssim \epsilon+\epsilon\sum_{k=-N}^N I_k|\log
I_k|.
\]
 Then,
\[
\int_A\int_A |\log|z-\xi||dzd\xi \lesssim
\epsilon^2+\epsilon^2\sum_{k=-N}^N I_k |\log I_k|.
\]
The trivial estimate
\[
|u\log u|\le C(\gamma)u^\gamma, \quad 0<\gamma<1, 0<u<1
\]
implies
\[
\sum_{j\neq 0} I_j |\log I_j|\lesssim \sum_{j\neq 0}
I_j^\gamma=\sum_{j\neq 0} (jI_j)^\gamma j^{-\gamma}\le
\left(\sum_{j\neq 0}|j|I_j\right)^{1/p}\left(\sum_{j\neq
0}|j|^{-\gamma p'}\right)^{1/p'} \lesssim \left(\sum_{j\neq
0}|j|I_j\right)^{\gamma}
\]
by H\"older inequality with $p=1/\gamma$ and $\gamma>0.5$. We have
\[
\epsilon^2 I_0|\log I_0|+\epsilon^2 \left(\sum_{j\neq 0}
|j|I_j\right)^{\gamma}\lesssim \epsilon^2+\epsilon^2 \sum_{j\neq 0}
|j|I_j
\]
and application of \eqref{eqn1} finishes the proof of Lemma
\ref{lg}.
\end{proof}

\end{appendix}
{\Large \part*{Acknowledgement}} The research of JB was supported by
the RTG grant NSF-DMS-1147523. The work of SD done in the second
part of the paper was supported by RSF-14-21-00025 and his research
on the rest of the paper was supported by the grants
NSF-DMS-1464479, NSF-DMS-1067413.

\vspace{1cm}

\end{document}